\newcommand\restr[2]{{
  \left.\kern-\nulldelimiterspace
  #1
  \vphantom{\big|}
  \right|_{#2}
  }}
\theoremstyle{plain}
\newtheorem{example}{\bf Example}
\newtheorem{lemma}{\bf Lemma}
\newtheorem{proposition}{\bf Proposition}
\newtheorem{remark}{Remark}
\newtheorem{theorem}{\bf Theorem}
\numberwithin{equation}{section}
\title[Gap results and existence of CMC free boundary hypersurfaces in rotational domains]{Gap results and existence of CMC free boundary hypersurfaces in rotational domains}
\author[A. Freitas, M.S. Santos and J.S. Sindeaux]{Allan Freitas$^{1, 2, \ast}$,  M\'{a}rcio S. Santos$^1$ and Joyce S. Sindeaux$^1$}
\address{$^1$Departamento de Matem\'{a}tica\\
Universidade Federal da Para\'{\i}ba\\
58.051-900 Jo\~{a}o Pessoa, Para\'{\i}ba, Brazil}
\address{$^2$Università degli Studi di Torino, Dipartimento di Matemática ``Giuseppe Peano'', Torino, TO, Italy.}
\email{allan@mat.ufpb.br}
\email{marcio.santos@academico.ufpb.br}
\email{joycessaraiva@gmail.com}
\keywords{Free boundary surfaces; Constant mean curvature; Rigidity.}
\subjclass[2020]{53C42, 53C50, 53E10.}
\thanks{$^\ast$Corresponding author.}
\begin{document}

\begin{abstract}
In this paper, we work with the existence and uniqueness of free boundary constant mean curvature hypersurfaces in rotational domains. These are domains whose boundary is generated by a rotation of a graph. We classify the CMC free boundary hypersurfaces as topological disks or annulus under some conditions on the function that generates the graph and a gap condition on the umbilicity tensor. Also, we construct some examples of free boundary minimal surfaces in the rotational ellipsoid that, in particular, satisfy our gap condition.\end{abstract}

\maketitle

\tableofcontents

\section{Introduction}



In this work, we consider an $n$-dimensional hypersurface $\Sigma$ with a smooth boundary that is compact, oriented, and immersed in a Riemannian manifold $M^{n+1}$ with a smooth boundary $\partial M$, such that $\partial\Sigma \subset \partial M$. In this setting, we denote by $N$ a unit normal vector field globally defined along $\Sigma$ and the shape operator of $\Sigma$ with respect to $N$ at $p$ is the self-adjoint linear operator $A:T_{p}M\longrightarrow T_{p}M$ given by  $A(X) =-\nabla_{X}N$,
where $\nabla$ stands for the Levi-Civita connection on $M$. The eigenvalues of A
are the principal curvatures of $\Sigma$ in $M$ and the mean curvature is $H=\frac{\mbox{trace}A}{n}$. Additionally, we are interested in  {\it free boundary CMC hypersurface in $M$}: when $\Sigma$ has constant mean curvature (CMC in short), and the boundary of $\Sigma$ meets the boundary of $M$ orthogonally. Such hypersurfaces are stationary for the area functional for variations preserving the enclosed volume (see, for example, \cite[Section 1]{Ros:1995}).
In particular, when $H=0$, we say that $\Sigma$ is a free boundary minimal hypersurface. 
In the particular case where the domain $M$ is the unitary ball $\mathbb{B}^3$ in the Euclidean space, the simplest examples of CMC free boundary surfaces are the equatorial disk, the critical catenoid (minimal surfaces) and the spherical caps. 

We remember that after its initial motivation by Courant \cite{courant1940} and preliminary developments (for example, \cite{nitsche1985},  \cite{struwe1984} and \cite{Ros:1995}), this topic has received plenty of attention, mainly after the 2010 decade and the undeniable contributions of Fraser and Schoen (\cite{fraser2011}, \cite{fraser2016}). These underlying works reveal, in particular, several similarities between free boundary minimal surfaces in an Euclidean unit ball and closed minimal surfaces in the sphere. In this sense, the classical results and strategies to obtain rigidity results in the last one could indicate the direction of interest in getting related developments in the free boundary case. 

In this direction, the so-called gap results in the second fundamental form give an important characterization of CMC surfaces in the sphere. A series of contributions from Simons \cite{Simons}, Lawson \cite{L}, and Chern, do Carmo and Kobayashi \cite{CdCK} get the following gap result for the second fundamental form $A$ of the immersion:

\begin{theorem}[Chern-do Carmo-Kobayashi \cite{CdCK}, Lawson \cite{L}, Simons \cite{Simons}] \label{thm:Simons}
Let $\Sigma$ be a closed minimal hypersurface in the unit sphere $\mathbb{S}^{n+1}$. Assume that the second fundamental form $A$ on $\Sigma$ satisfies
$$|A|^2 \leq n.$$
Then
\begin{enumerate}
  \item either $|A|^2=0$ and $\Sigma$ is an equator;
  \item or $|A|^2=n$ and $\Sigma$ is a Clifford minimal hypersurface.
\end{enumerate}
\end{theorem}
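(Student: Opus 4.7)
The plan is to follow the classical Simons--Lawson--Chern--do~Carmo--Kobayashi strategy. The starting point is Simons' identity for a minimal hypersurface $\Sigma$ in the unit sphere $\mathbb{S}^{n+1}$ which, using that the ambient sectional curvature is constant equal to $1$ and that $\operatorname{tr} A = 0$, reads
$$\frac{1}{2}\Delta |A|^2 = |\nabla A|^2 + |A|^2\bigl(n - |A|^2\bigr).$$
My first step is to derive (or invoke) this identity: the ambient Ricci contribution yields the $n|A|^2$ term, while the Gauss equation combined with minimality produces the $-|A|^4$ term.

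Under the hypothesis $|A|^2 \leq n$, the right-hand side is pointwise non-negative, so $\Delta |A|^2 \geq 0$. Since $\Sigma$ is closed, integration by parts gives
$$0 = \int_{\Sigma} \frac{1}{2}\Delta |A|^2 \, d\Sigma = \int_{\Sigma}\Bigl( |\nabla A|^2 + |A|^2\bigl(n - |A|^2\bigr)\Bigr) d\Sigma,$$
and both non-negative integrands must vanish pointwise. Consequently $\nabla A \equiv 0$ and $|A|^2\bigl(n - |A|^2\bigr) \equiv 0$.

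From the second equation, at each point $|A|^2 \in \{0, n\}$. Since $|A|^2$ is continuous and $\Sigma$ is connected, either $|A|^2 \equiv 0$ (and $\Sigma$ is totally geodesic, hence an equator) or $|A|^2 \equiv n$ identically. This yields the dichotomy in the statement.

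The main obstacle is the second case: showing that a minimal hypersurface of $\mathbb{S}^{n+1}$ with parallel shape operator and $|A|^2 = n$ must be a Clifford minimal hypersurface $S^k\bigl(\sqrt{k/n}\bigr) \times S^{n-k}\bigl(\sqrt{(n-k)/n}\bigr)$. Here I would invoke Lawson's classification theorem for minimal hypersurfaces with parallel second fundamental form in a sphere. A more self-contained route is to diagonalize the parallel operator $A$, observe that its eigendistributions are parallel and hence integrable with totally geodesic leaves, apply the de~Rham decomposition to $\Sigma$, and then match the product structure against the constraints $\operatorname{tr} A = 0$ and $|A|^2 = n$ to fix the radii of the two spherical factors. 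This algebraic-geometric identification is the most delicate step of the argument.
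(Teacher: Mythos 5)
The paper does not prove this statement: Theorem \ref{thm:Simons} is quoted in the introduction as a classical background result, with the proof deferred to the cited works of Simons, Lawson, and Chern--do Carmo--Kobayashi. So there is no in-paper argument to compare against; your proposal must be judged on its own. It is the standard and correct argument. Simons' identity $\tfrac{1}{2}\Delta|A|^2=|\nabla A|^2+|A|^2(n-|A|^2)$ is stated correctly for a minimal hypersurface of $\mathbb{S}^{n+1}$, the integration over the closed $\Sigma$ forces both non-negative terms to vanish, and continuity plus connectedness gives the dichotomy $|A|^2\equiv 0$ or $|A|^2\equiv n$. The only place where you assert rather than prove is the identification of the second case with a Clifford hypersurface; invoking Lawson's local rigidity theorem there is legitimate (it is an independent classification of minimal hypersurfaces with parallel second fundamental form, not the gap theorem itself). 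If you want the self-contained route you sketch, the missing detail is why $A$ has exactly two distinct eigenvalues: since the eigendistributions of the parallel operator $A$ are parallel, the de Rham splitting forces mixed sectional curvatures to vanish, and the Gauss equation gives $1+\lambda_i\lambda_j=0$ for eigenvectors in distinct factors; three distinct eigenvalues would then yield $\lambda_i\lambda_j=\lambda_i\lambda_k=\lambda_j\lambda_k=-1$, which is impossible, and the constraints $\operatorname{tr}A=0$, $|A|^2=n$ then pin down the radii as you say. With that observation supplied, your outline is a complete and faithful rendering of the classical proof.
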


In the study of CMC hypersurfaces in the sphere, Alencar and do Carmo \cite{hilario} also obtained a gap result, but now considering the umbilicity tensor $\phi=A-Hg$.

\begin{theorem}[Alencar-do Carmo \cite{hilario}]
Let $\Sigma$ be a closed, CMC hypersurface in the unit sphere $\mathbb{S}^{n+1}$. If
$$\|\phi\|^2\leq C_H,$$

\begin{enumerate}
\item either $\|\phi\|^2\equiv 0$ and $\Sigma^n$ is totally umbilical in $\mathbb{S}^{n+1}$,

\item  $\|\phi\|^2\equiv C_H$ and $\Sigma^n$ is an $H(r)$-torus in $\mathbb{S}^{n+1}$.
\end{enumerate}
Here, $C_{H}$ is related to a root of a polynomial whose coefficients depend on the mean curvature $H$ and the dimension $n$\footnote{For details, see the Introduction of \cite{hilario}.}.
\end{theorem}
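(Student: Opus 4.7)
The plan is to reduce the gap statement to a subharmonicity argument for $|\phi|^2$. The starting point is the Simons-type Bochner identity for the umbilicity tensor on a CMC hypersurface in the unit sphere, which, since $H$ is constant, reads
\begin{equation*}
\tfrac{1}{2}\Delta|\phi|^2 = |\nabla\phi|^2 + |\phi|^2\bigl(n(1+H^2)-|\phi|^2\bigr) - nH\,\mathrm{tr}(\phi^3).
\end{equation*}
To control the cubic trace I would invoke the pointwise algebraic inequality of Okumura for any traceless symmetric $n\times n$ matrix $\phi$, namely
\begin{equation*}
|\mathrm{tr}(\phi^3)| \leq \tfrac{n-2}{\sqrt{n(n-1)}}\,|\phi|^3,
\end{equation*}
with equality if and only if $\phi$ has at most two distinct eigenvalues, one of multiplicity $1$ and the other of multiplicity $n-1$.

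Combining these two ingredients produces a differential inequality of the form
\begin{equation*}
\tfrac{1}{2}\Delta|\phi|^2 \geq |\phi|^2\,Q_H(|\phi|),
\end{equation*}
where $Q_H(x) = n(1+H^2) - \tfrac{n(n-2)}{\sqrt{n(n-1)}}\,|H|\,x - x^2$ is a downward-opening quadratic with a unique positive root, whose square is precisely the constant $C_H$ appearing in the statement. Under the hypothesis $\|\phi\|^2 \leq C_H$ one has $Q_H(|\phi|)\geq 0$, so $|\phi|^2$ is a non-negative subharmonic function on the closed manifold $\Sigma$, and hence constant by the maximum principle.

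Constancy of $|\phi|^2$ then forces every intermediate inequality to be an equality: in particular $\nabla\phi\equiv 0$ and Okumura's inequality saturates pointwise. If $\|\phi\|^2\equiv 0$, then $\Sigma$ is totally umbilical and we land in case (1). Otherwise $\|\phi\|^2\equiv C_H$, and the equality case of Okumura forces $\Sigma$ to have exactly two distinct constant principal curvatures with multiplicities $1$ and $n-1$; the classification of isoparametric hypersurfaces with two principal curvatures in $\mathbb{S}^{n+1}$ then identifies $\Sigma$ with an $H(r)$-torus, giving case (2). The main technical difficulty is the sharp algebraic step (Okumura's lemma) together with the bookkeeping needed to match the positive root of $Q_H$ with the explicit polynomial expression for $C_H$ given in the reference; once this is in place the remainder is standard CMC calculus on the sphere.
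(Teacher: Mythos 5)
The paper does not prove this statement---it is quoted verbatim from Alencar--do Carmo \cite{hilario} as background---so the only meaningful comparison is with the original reference, and your sketch (Simons-type identity for $\phi$, Okumura's algebraic lemma to control $\mathrm{tr}(\phi^3)$, the resulting subharmonicity of $|\phi|^2$ on the closed $\Sigma$, and the equality analysis via $\nabla\phi\equiv 0$ and the isoparametric classification) is precisely the argument used there. It is correct as outlined, with only routine bookkeeping left (sign conventions in the Simons formula, and identifying $C_H$ with the square of the positive root of the quadratic $Q_H$).
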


We could describe some contributions by starting from these two characterizations and studying similar phenomena for free boundary CMC hypersurfaces in the ball. In \cite{Ambrozio:2021}, Ambrozio and Nunes proved that if $\Sigma$ is a  compact free boundary minimal surface in $\mathbb{B}^3$ and for all points $x$ in $\Sigma$,
\begin{equation}\label{gapambnunes}
|A|^2(x)\langle x, N(x)\rangle ^2\leq 2,    
\end{equation}
then $\Sigma$ is a flat equatorial disk or a critical catenoid. In higher dimensions, some similar gap results to \eqref{gapambnunes} can be obtained for $2$-dimensional surfaces in the ball (see \cite{barbosaviana1}) and, with a topological rigidity, in submanifolds of any codimension in higher dimensional balls (see \cite[Theorem 3.7]{barbosaviana2}). Also, some gaps result just in the second fundamental form, as that in Theorem \ref{thm:Simons}, was obtained in \cite{CMV}, \cite{Barbosa:2021} and 
\cite{BarbosaFreitas:2023}.

The question arises: Does an analogous result hold to these in the context of free boundary CMC non-minimal surfaces? Barbosa, Cavalvante, and Pereira answered this question in \cite{Barbosa:2023}. More specifically, in this work they proved that if $\Sigma$ is a compact free boundary CMC surface in $\mathbb{B}^3$ and for all points $x$ in $\Sigma$,
$$|\phi|^2\langle \vec{x}, N\rangle ^2\leq\frac{1}{2}(2+2H\langle \vec{x},N\rangle ^2),$$
then $\Sigma$ is a totally umbilical disc or a part of a Delaunay surface\footnote{In \cite{Barbosa:2023} the gap appears in a different way because the authors consider the non-normalized mean curvature.}.

In \cite{Bettiol}, Bettiol, Piccione and Santoro have studied the existence of CMC disks and Delaunay annuli that are free boundary in a ball. Furthermore, in addition to the studies involving free boundary surfaces in the unit ball, investigations of this kind have also been conducted in other domains. 
When the ambient space is a wedge (López \cite{Lopez:2014}), a slab (Ainouz and Souam \cite{Aiounz:2016}), a cone (Choe \cite{Choe:2011}) or a cylinder (Lopez and Pyo \cite{Lopez-Pyo:2014}). 
 
Regarding rigidity conclusions starting from a gap condition,   Andrade, Barbosa, and Pereira \cite{Andrade:2021} established some results for balls conforming to the Euclidean ball. More recently, when the ambient space is a strictly convex domain in a $3$-dimensional Riemannian manifold with
sectional curvature bounded above, and $\Sigma$ is a CMC free boundary surface in this region, Min and Seo  \cite{Min:2022} establish a pinching condition on the length of the umbilicity tensor on $\Sigma$. This criterion ensures that the surface is topologically equivalent to a disk or an annulus. In the particular case where the domain is a geodesic ball of a $3$-dimensional space form, they concluded that $\Sigma$ is a spherical cap or a Delaunay surface. In \cite{BarbosaFreitas:2023}, the first author, jointly with Barbosa, Melo, and Vitório, investigated the existence of compact free boundary
minimal hypersurfaces immersed in domains whose boundary is a regular level set, in particular giving some gap results for free boundary minimal hypersurfaces immersed in an Euclidean ball and
in a rotational ellipsoid.


This work explores some gap results for CMC free boundary surfaces in rotation domains described below. By considering a curve $\alpha(t)=(f(t),t)$, where $f$ is a positive real-valued smooth function, we generate a hypersurface $\partial\Omega$ starting from the revolution of this curve in an appropriate axis. In this sense, we can describe a domain $\Omega$ such that $\partial \Omega\subset F^{-1}(1)$ is a revolution hypersurface and $F:\mathbb{R}^n\times I\rightarrow \mathbb{R}$ is a smooth function given by
$$F(x, y) = \frac{1}{2}\left(|x|^2-f(y)^2\right) + 1. $$
Furthermore, we consider a hypersurface $\Sigma$, which is a free boundary CMC surface in $\Omega$. In our first results, we use an auxiliary function $g$ given by 
$$g(x,y)=\langle \Bar{\nabla}F,N\rangle$$
to get a topological characterization for CMC free boundary surfaces in $(n+1)$-dimensional rotation domains, with $n\geq 2$.
In particular, we obtain the following gap result for CMC surfaces in these domains
\begin{theorem}
\label{gap rotacional cmc n=3}
Let $\Sigma^2$ be a compact CMC surface with a free boundary in $F^{-1}(1)$. If  $(f')^2+ff''+1\leq 0$ and
$$|\phi|^2g(x,y)^2\leq\frac{1}{2}(2+2Hg(x,y))^2$$
on $\Sigma$, then $\Sigma$ is homeomorphic to a disk or an annulus.
\end{theorem}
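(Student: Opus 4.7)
My approach is a Gauss--Bonnet argument in the spirit of Ambrozio--Nunes and Barbosa--Cavalcante--Pereira, with the auxiliary function $g=\langle \bar\nabla F,N\rangle$ playing the role that the Euclidean support function $\langle X,N\rangle$ plays in the ball case. The first move is to reformulate the gap hypothesis geometrically. Since $\Sigma^2$ sits in a flat ambient, the Gauss equation gives $K_\Sigma=H^2-\tfrac12|\phi|^2$, and an elementary manipulation shows that $|\phi|^2 g^2\le\tfrac12(2+2Hg)^2$ is equivalent to
$$\psi:=1+2Hg+K_\Sigma g^2\ge 0 \quad\text{on }\Sigma.$$
When $H=0$ this is exactly the Ambrozio--Nunes pointwise quantity $1+K_\Sigma g^2\ge 0$, so $\psi$ is the natural CMC object to track.

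Next I would compute $\Delta^\Sigma g$ on $\Sigma$. Using $\bar\nabla(\bar\nabla F)=\text{Hess}\,F$, the Codazzi identity (which simplifies under the CMC hypothesis), and the standard derivation for a support-type function, one expects an identity of the schematic form
$$\Delta^\Sigma g+|A|^2 g=\text{tr}_\Sigma(\text{Hess}\,F|_{T\Sigma})\,H+(\text{lower-order ambient terms}).$$
The rotational structure makes $\text{Hess}\,F$ block-diagonal, with eigenvalue $1$ on the $\mathbb{R}^n$ factor and eigenvalue $-(f')^2-ff''$ on the $y$ direction; the hypothesis $(f')^2+ff''+1\le 0$ is precisely $\text{Hess}\,F\ge I$ on the whole ambient, so these ambient terms will have a controlled sign in our favor.

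The free boundary condition enters through the boundary geometry of $\Sigma$: along $\partial\Sigma$ the outward conormal $\nu$ of $\Sigma$ is parallel to $\bar\nabla F$, so the geodesic curvature $\kappa_g$ of $\partial\Sigma$ in $\Sigma$ equals, up to normalization, the second fundamental form of $\partial\Omega=F^{-1}(1)$ in the tangent direction of $\partial\Sigma$, and is therefore bounded below using $\text{Hess}\,F\ge I$; at the same time, $\partial_\nu g$ admits a clean expression on $\partial\Sigma$ in terms of $|\bar\nabla F|$ and $H$. Combining these ingredients with the Gauss--Bonnet formula
$$2\pi\,\chi(\Sigma)=\int_\Sigma K_\Sigma\,dA+\int_{\partial\Sigma}\kappa_g\,ds$$
and an integrated differential inequality for $\log\psi$ on $\{\psi>0\}$ (with a standard regularization near the zero set of $\psi$) would yield $\chi(\Sigma)\ge 0$. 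Since $\partial\Sigma\ne\emptyset$, this forces $\Sigma$ to be topologically a disk or an annulus, as claimed.

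The main obstacle I expect is producing the correct weighted differential inequality for $\log\psi$ (or a suitable comparable quantity) in this non-spherical ambient. Unlike the Euclidean ball case, where $\bar\nabla F=X$ and $\text{Hess}\,F=I$ pointwise, here $\text{Hess}\,F$ is genuinely point-dependent and $g$ no longer satisfies the clean identity $\Delta g+|A|^2 g=-2H$ available in the ball. Each indefinite-sign term must be absorbed either by the Codazzi cancellation forced by the CMC property, by the convexity $\text{Hess}\,F\ge I$, or by the strong convexity of $\partial\Omega$ contributing positively to the boundary integral. This is the delicate interplay that the hypothesis $(f')^2+ff''+1\le 0$ is tailor-made to control.
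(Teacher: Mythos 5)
Your opening reduction is correct and matches the paper's key algebraic identity: writing $\psi=1+2Hg+K_\Sigma g^2=(1+k_1g)(1+k_2g)$ and using $K_\Sigma=H^2-\tfrac12|\phi|^2$, the hypothesis $|\phi|^2g^2\le\tfrac12(2+2Hg)^2$ is exactly $\psi\ge 0$ (this is the computation \eqref{eq lambda/gap}), and your remark that $(f')^2+ff''+1\le 0$ is the same as $D^2F\ge \mathrm{Id}$ is also right. The problem is everything after that. The entire analytic engine of your argument --- a differential inequality for $\log\psi$ that integrates against Gauss--Bonnet to give $\chi(\Sigma)\ge 0$ --- is precisely the step you do not supply, and I do not believe it exists in the form you describe. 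The pointwise information $\psi\ge 0$ only gives $K_\Sigma\ge -(1+2Hg)/g^2$ where $g\ne 0$, a bound that degenerates as $g\to 0$ (in particular along all of $\partial\Sigma$, where the free boundary condition forces $g=0$), so there is no direct control of $\int_\Sigma K_\Sigma^-$; and $\psi$ is not a Jacobi-type quantity (there is no stability hypothesis here), so there is no identity of the form $\Delta\log\psi\le cK_\Sigma+\ldots$ to integrate. Your own closing paragraph concedes that this ``weighted differential inequality'' is the main obstacle; as written, the proposal is a plan whose central lemma is missing, not a proof. This is also not ``the spirit of Ambrozio--Nunes'': their argument (and the paper's) is not an integration of a logarithmic quantity.

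What the paper actually does is a convexity/Morse-type argument. Lemma \ref{autovalores} uses $D^2F\ge\mathrm{Id}$ and $\langle TX,X\rangle\ge 0$ to get $\mathrm{Hess}_\Sigma F\ge \mathrm{Id}+g\,A$, so its eigenvalues dominate $\tilde\lambda_i=1+k_ig$. The gap gives $\tilde\lambda_1\tilde\lambda_2\ge 0$, but that alone does not fix the sign of each $\tilde\lambda_i$: Proposition \ref{Hess positiva} needs a separate connectedness argument --- using that $v=\tilde\lambda_1+\tilde\lambda_2=2+2Hg$ equals $2$ on $\partial\Sigma$ and that umbilic points of a non-totally-umbilical CMC surface are isolated --- to conclude $v\ge 0$ and hence $\mathrm{Hess}_\Sigma F\ge 0$. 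This trace-sign step is a genuine difficulty specific to the non-minimal case and your sketch never confronts it. With $F|_\Sigma$ convex, the minimum set $\mathcal C$ is totally convex and, by the strict convexity of $\partial\Omega$ from Lemma \ref{lemma weingarten} (which also gives $k_g>0$, so Gauss--Bonnet is used only in the easy subcase $K_\Sigma\ge 0$), $\mathcal C$ lies in the interior; the dichotomy ``$\mathcal C$ is a point'' versus ``$\mathcal C$ is a closed geodesic'' then yields $\pi_1(\Sigma)$ trivial or $\mathbb Z$, i.e.\ disk or annulus. To salvage your write-up you would either have to prove the missing inequality for $\log\psi$ (which I doubt is available) or switch to this Hessian-convexity route.
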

Some considerations about the last result are necessary in this point. The inequality condition in $f$ has an interesting interpretation in terms of the principal curvatures of the profile curve, and the equality case is characterized by the euclidean ball (see Remarks \ref{explaining_fcond} and \ref{equality_ball}). In particular, this condition implies that the boundary of the domain is convex (see demonstration of Lemma \ref{lemma weingarten}). About the existence of free boundary minimal disks in convex regions of $\mathbb{R}^3$, we refer Struwe \cite{struwe1984} when he shows that there is at least one such disk. More recently,  Haslhofer and Ketover \cite{Ketover:2023} show these regions admit at least two free boundary minimal disks. On the other hand, by studying the existence of free boundary minimal annuli inside convex subsets of $3$-dimensional Riemannian manifolds with nonnegative Ricci curvature we refer \cite{Maximo:2017} by Máximo, Nunes, and Smith.  Also, we note that Theorem 3 is equivalent to \cite{Ambrozio:2021} and \cite[Theorem 1.3]{Barbosa:2023} when $\Omega$ is a ball. In fact, the rigidity statement in these last ones is expected: the free boundary CMC disks in the ball are totally umbilical (by Nitsche's result \cite{nitsche1985}). However, not all free boundary CMC annuli in the ball are catenoids or Delaunay surfaces: this is proved in recent papers by Fernandez- Hauwirth-Mira
(\cite{Fernandez:2023})
and Cerezo-Fernandez-Mira (\cite{Cerezo:2023}). In this sense, in the ball case, it shows that there exist free boundary minimal and CMC annuli that do not satisfy the gap inequality.

Talking about the higher dimensional case, we can prove the following result for minimal free-boundary hypersurfaces.

\begin{theorem}\label{gaphighdim}
Let $\Sigma^n$ be an $n$-dimensional free boundary minimal hypersurface in
a domain $\Omega$ with boundary $\partial\Omega\subset F^{-1}(1)$. Assume that $(f')^2+ff''+1\leq 0$. If
\begin{align*}
    |A|^2 g(x,y)^2\leq \frac{n}{n-1},
\end{align*}
for every $(x,y)\in\Sigma^n$, then one of the following is true:\\
1. $\Sigma^n$ is diffeomorphic to a disk $\mathbb{D}^n$.\\
2. $\Sigma^n$ is diffeomorphic to $\mathbb{S}^1\times\mathbb{D}^{n-1}$ and $C(\Sigma^n)$ is a closed geodesic.
\end{theorem}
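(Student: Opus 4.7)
The plan is to follow the Ambrozio--Nunes strategy, adapted to the rotationally symmetric ambient domain $\Omega$. Set $g = \langle \bar\nabla F, N\rangle$ on $\Sigma$. Since $\partial\Omega = F^{-1}(1)$ and the free boundary hypothesis forces $N$ to be tangent to $\partial\Omega$ along $\partial\Sigma$, while $\bar\nabla F$ is normal to that level set, the Dirichlet datum $g\equiv 0$ on $\partial\Sigma$ comes for free. This is the natural anchor for all subsequent analytic estimates.

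The technical heart is a Jacobi-type equation for $g$ in the minimal case. Writing $\bar\nabla^2 F = I_{n+1} + R$ with $R = -(f'^2+ff''+1)\,dy\otimes dy$, which is positive semidefinite by hypothesis, a direct ambient computation in flat $\mathbb{R}^{n+1}$ using $H = 0$ should yield an identity of the schematic form
\begin{equation*}
\Delta_\Sigma g + |A|^2 g \;=\; \mathcal{T}(R,N,A),
\end{equation*}
in which the right-hand side has a definite sign controlled by the assumption on $f$. Combined with the pointwise pinching $|A|^2 g^2 \leq n/(n-1)$ and the Dirichlet condition $g|_{\partial\Sigma}=0$, this produces a sharp elliptic inequality whose equality case is highly rigid. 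A nodal and critical set analysis of $g$ then yields the dichotomy: either the interior critical set $C(\Sigma^n)$ consists of a single nondegenerate extremum of $g$, in which case a Morse argument gives $\Sigma \cong \mathbb{D}^n$; or $C(\Sigma^n)$ is a smoothly embedded closed $1$-submanifold along which $g$ attains its interior extremum, $\Sigma$ deformation retracts onto it, and so $\Sigma \cong \mathbb{S}^1 \times \mathbb{D}^{n-1}$. In the latter case, restricting the Jacobi equation to $C(\Sigma^n)$ and using that $g$ attains its interior extremum along this curve forces the geodesic curvature of $C(\Sigma^n)$ in $\Sigma$ to vanish.

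The main obstacle is deriving the Jacobi-type identity above: unlike in the Euclidean ball case $F = |x|^2/2$, where $\bar\nabla F$ is conformal and one obtains $\Delta_\Sigma g = -|A|^2 g$ directly, the anisotropy of $\bar\nabla^2 F$ here introduces third-order corrections in $F$ (hence in $f$) that must be absorbed using the assumption $(f')^2 + ff'' + 1 \leq 0$ together with the pinching condition. Once the identity and resulting inequality are in place, the topological and geodesic conclusions follow by standard nodal-set and Morse-theoretic arguments, essentially paralleling those employed in the two-dimensional setting of Theorem \ref{gap rotacional cmc n=3}.
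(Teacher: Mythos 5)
There is a genuine gap, and the route you sketch is not the one the paper (or Ambrozio--Nunes) actually follows. The paper's argument never writes a Jacobi-type equation for $g=\langle\bar\nabla F,N\rangle$; its technical heart is Proposition~\ref{Hess positiva dim alta}, which shows that $\mathrm{Hess}_\Sigma F\geq 0$ on all of $\Sigma$. Concretely, using $(f')^2+ff''+1\leq 0$ one gets $\mathrm{Hess}_\Sigma F(X,X)\geq |X|^2+\langle A(X,X),(\bar\nabla F)^\perp\rangle$, so the eigenvalues satisfy $\lambda_i\geq\tilde\lambda_i:=1+\langle A(e_i,e_i),(\bar\nabla F)^\perp\rangle$; minimality gives $\sum_i\tilde\lambda_i=n$, the pinching $|A|^2g^2\leq n/(n-1)$ gives $\sum_i\tilde\lambda_i^2\leq n^2/(n-1)=(\sum_i\tilde\lambda_i)^2/(n-1)$, and Chen's algebraic lemma (Lemma~\ref{lema algebrico}) forces all $\tilde\lambda_i$ to have the same sign, hence all nonnegative. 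With $F|_\Sigma$ convex, the set $\mathcal{C}=\{F=\min_\Sigma F\}$ is totally convex and lies in the interior, and the dichotomy (point $\Rightarrow$ disk; closed geodesic $\Rightarrow$ $\mathbb{S}^1\times\mathbb{D}^{n-1}$) follows from standard Morse/Cheeger--Gromoll-type arguments as in the two-dimensional Theorem~\ref{gap rotacional cmc n=3}. Your proposal misses this entire mechanism: in particular, you never use the algebraic lemma that is indispensable in dimension $n\geq 3$, where the simple product computation $\tilde\lambda_1\tilde\lambda_2\geq 0$ of the surface case is unavailable.

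The specific steps in your sketch that would fail or remain unsubstantiated are these. First, the identity $\Delta_\Sigma g+|A|^2g=\mathcal{T}(R,N,A)$ is only asserted "schematically"; differentiating $g=\langle\bar\nabla F,N\rangle$ twice produces terms involving the \emph{third} derivatives of $F$ (hence $f'''$), and the hypothesis $(f')^2+ff''+1\leq 0$ controls only second-order data of $F$, so there is no reason the right-hand side has a sign. You correctly identify this as "the main obstacle" but do not resolve it, and the paper avoids it entirely by working with $\mathrm{Hess}_\Sigma F$ rather than with a PDE for $g$. Second, even granting such an inequality, a nodal/critical-set analysis of $g$ does not deliver the stated diffeomorphism types for $n\geq 3$: the set $C(\Sigma^n)$ in the theorem is the minimum set of $F$ on $\Sigma$, not the critical or extremal set of $g$, and the conclusion that it is a totally geodesic closed curve onto which $\Sigma$ retracts comes from total convexity of a sublevel set of a convex function, not from restricting a Jacobi equation to a nodal set. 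As written, your argument would need to be rebuilt around the convexity of $F|_\Sigma$ to close these gaps.
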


The last part of the paper is dedicated to construct new examples of CMC surfaces that are free boundary in a particular rotational domain:  the rotational ellipsoid. By exploring the construction of Delaunay surfaces of Kenmotsu \cite{Kenmotsu:1980} (see also \cite[Section 4]{Barbosa:2023}), we show some examples of catenoids, nodoids, and onduloids that are free-boundary in that domain and satisfy the gap condition of the previous results.

This paper is organized as follows. In the second section, we approach some preliminaries for the theme and obtain some auxiliary lemmas that permit obtaining the main results of Section 3, about CMC free boundary surfaces in $3$-dimensional rotation domains and  minimal free boundary surfaces in $(n + 1)$-dimensional rotation domains. Finally, in the last section, we get some examples of Delaunay surfaces that are free boundary and satisfy our pinching condition in the particular case where the ambient space is a rotational ellipsoid.

\section{Preliminaries}\label{sec:Background}

Throughout this paper, we will consider $\Omega\subset\mathbb{R}^{n+1}$, with $n\geq 2$, be a rotation domain with smooth boundary $\partial\Omega\subset F^{-1}(1)$ where $F:\mathbb{R}^n\times I\rightarrow\mathbb{R}$ is a smooth function for some interval $I\subset\mathbb{R}$. We denote by $\Bar{N}:=\frac{\bar{\nabla} F}{|\bar{\nabla} F|}$ the outward
unit normal to $\partial\Omega$. Let $\Sigma^{n}\hookrightarrow\Omega$ an hypersurface with boundary such
that $\partial\Sigma\subset\partial\Omega$. We denote $N$ the outward unit normal to $\Sigma$ and $\nu$ the outward conormal along $\partial\Sigma$ in $\Sigma$. In this scope, a hypersurface $\Sigma$ is
called \emph{free boundary} if $\Sigma$ meets $\partial\Omega$ orthogonally, there is, $\nu = \Bar{N}$ along $\partial\Sigma$ or, equivalently, $\langle N, \Bar{N}\rangle = 0$ along $\partial\Sigma$.

More specifically, for $n=2$, let us consider a rotational hypersurface in the following sense. Let $\alpha(t) =(f(t), t)$ be a plane curve $\alpha$ that is the graph of a positive real valued smooth function $f : I \rightarrow \mathbb{R}$ in the $x_1x_3$-plane. Let $\Theta$ be a parametrization of the unit circle $\mathbb{S}^1$ in the plane $x_3 = 0$. The surface of revolution with generatriz $\alpha$ can be parametrized by
$$X(\Theta, t) = (\Theta f(t), t)=(\cos\theta f(t),\sin\theta f(t),t).$$
In this scope, we study free boundary surfaces $\Sigma$ in domains $\Omega$ which boundary is a surface of revolution given above.

Let us also consider $F : \mathbb{R}^2\times I \rightarrow \mathbb{R}$ be the smooth function defined by
\begin{align}
   \label{def F}
   F(x, y) = \frac{1}{2}\left(|x|^2-f(y)^2\right) + 1, 
\end{align}
where $x = (x_1,x_2)$ and $y = x_3$, we have that $\partial\Omega\subset F^{-1}(1).$ Notice that $1$ is a regular value of $F$.

Observe that
$$\bar{\nabla}F(x,y)=(x,-f(y)f'(y))=(x,y)+(0,-y-f(y)f'(y)),$$
where $y=\langle (x,y),E_3\rangle.$ Then,
\begin{align*}
    D^2F= \begin{pmatrix} 1 & 0 & 0 \\ 0 & 1 & 0\\ 0 & 0 & -(f'(y))^2-f(y)f''(y)  \end{pmatrix}=\text{Id}_{3\times 3}+ \begin{pmatrix} 0 & 0 & 0 \\ 0 & 0 & 0\\ 0 & 0 & -(f'(y))^2-f(y)f''(y)-1  \end{pmatrix}.
\end{align*}
Therefore, for all $X,Y \in T(\Sigma)$ we have
\begin{align}
\label{hess}
    \text{Hess}_\Sigma F(X,Y)&=\langle\bar{\nabla}_X(\bar{\nabla} F)^\top,Y\rangle\nonumber\\
    &=\langle\bar{\nabla}_X(\bar{\nabla} F -\langle\bar{\nabla} F,N\rangle N),Y\rangle\nonumber\\
    &=D^2F(X,Y)+\langle\bar{\nabla} F,N\rangle\langle A_NX,Y\rangle\nonumber\\
    &=\langle X,Y\rangle+g(x,y)\langle A_NX,Y\rangle-((f'(y))^2+f(y)f''(y)+1)\langle TX,Y\rangle,
\end{align}
where $T:T_{(x,y)})\Sigma\rightarrow T_{(x,y)}\Sigma$ is given by $TX=\langle X,E_3^\top\rangle E_3^\top$ and
$$g(x,y)=\langle\bar{\nabla} F,N\rangle=\langle(x,y),N\rangle+\langle N,E_3\rangle(-y-f(y)f'(y)).$$
It is straighfoward observe that, since the outward unit normal to $\partial\Omega$ is given by $\bar{N}=\frac{\bar{\nabla}F}{\bar{\nabla}F},$ the free boundary condition ensures that
$$g(x,y)=\langle\bar{\nabla} F,N\rangle=0\quad\mbox{on}\quad\partial\Sigma.$$
Furthermore, it is easy to check that $T$ is a self adjoint operator whose $E_3^\top$ is an eigenvector
associated with the eigenvalue $|E_3^\top|^2$. Besides, we can take any nonzero vector in
$T\Sigma$, orthogonal to $E_3^\top$, to verify that zero is also an eigenvalue of $T$. Therefore
\begin{align}
    \label{positivo definidoo}
    0 \leq \langle TX, X\rangle \leq |E_3^\top|^2|X|^2,\ \forall X \in T_{(x,y)}\Sigma.
\end{align}

From now one, we use the following condition for the function of the profile curve

\begin{equation}\label{ineq_f}
(f')^2+ff''+1\leq 0.   
\end{equation}

\begin{remark}\label{explaining_fcond}
We note that the condition \eqref{ineq_f} has an intriguing interpretation in terms of curvatures of the meridian and parallels of $\partial\Omega$. Indeed, in dimension $3$, the
principal curvatures of $\partial\Omega$ are
$$-\frac{f''}{(1+(f')^2)^{\frac{3}{2}}}\text{ and } \frac{1}{f\sqrt{1+(f')^2}},$$
which are attained by the meridians and the parallels, respectively. Then, the inequality $(f')^2+ff''+1\leq 0$ means that $\kappa_1\geq \kappa_2$ in $\Omega$, where $\kappa_1$ and $\kappa_2$ are the curvature of the meridians and parallels, respectively. Furthermore, this makes clear that $\kappa_{1}=\kappa_{2}$ for all $s$ and therefore, $\partial\Omega$ is a sphere, precisely in the equality case. 
\end{remark}

\begin{remark}
\label{equality_ball}
In an alternative way of the last remark, we observe that if $(f')^2+ff''+1=0$, we get
$$0=(f'(t))^2+f(t)f''(t)+1=(t+f(t)f'(t))'.$$
Then,
$$t+f(t)f'(t)=c_1,$$
where $c_1$ is a constant. Thus,
$$(f^2(t))'=2f(t)f'(t)=2(c_1-t)=(2c_1t-t^2)'.$$
Therefore,
$$f^2(t)=2c_1t-t^2+c_2,$$
where $c_2$ is a constant. It implies that
$$F(x,y)=\frac{1}{2}(|x|^2+y^2-2c_1y-c_2)+1.$$
Then, the set $F^{-1}(1)$ is the sphere
$$x_1^2+x_2^2+(y-c_1)^2=c_2+c_1^2.$$
\end{remark}

\begin{lemma}
\label{autovalores}
Suppose that $(f')^2+ff''+1\leq 0$. Then for each ${(x,y)}\in \Sigma$, the eigenvalues of Hess$_\Sigma F(x,y)$ are greater or equal to
$$1+k_1g(x,y) \text{ and } 1+k_2g(x,y),$$
where $k_1\leq k_2$ are the principal curvatures of $\Sigma$ with respect to the normal vector $N$.
\end{lemma}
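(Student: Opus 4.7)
My plan is to read off the claimed bound directly from the Hessian identity \eqref{hess}, after observing that the $T$-term there is pointwise nonnegative under the hypothesis \eqref{ineq_f}. Indeed, \eqref{hess} gives
$$\text{Hess}_\Sigma F(X,X) = |X|^2 + g(x,y)\langle A_N X, X\rangle - \bigl((f'(y))^2 + f(y)f''(y) + 1\bigr)\langle TX, X\rangle.$$
By \eqref{ineq_f}, the scalar coefficient $-((f')^2+ff''+1)$ is nonnegative, while \eqref{positivo definidoo} asserts $\langle TX,X\rangle\geq 0$; multiplying, the last term above is $\geq 0$ for every $X\in T_{(x,y)}\Sigma$. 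Discarding it yields the bilinear-form inequality
$$\text{Hess}_\Sigma F(X,X) \;\geq\; \langle (I + g(x,y)\,A_N)X,X\rangle \qquad \text{for all } X\in T_{(x,y)}\Sigma.$$

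I would then identify the eigenvalues of the right-hand side. Since the principal curvatures $k_1\leq k_2$ are by definition the eigenvalues of the self-adjoint Weingarten operator $A_N$, associated with an orthonormal basis of principal directions $\{e_1,e_2\}$, the operator $I+g(x,y)A_N$ diagonalizes in that same basis with eigenvalues exactly $1+k_1 g(x,y)$ and $1+k_2 g(x,y)$. To transfer the bilinear-form inequality to an inequality between eigenvalues, I would invoke the standard monotonicity of eigenvalues under the symmetric bilinear-form order, i.e.\ the Courant--Fischer min-max principle (equivalently, Weyl's inequality applied to the positive semi-definite difference $\text{Hess}_\Sigma F-(I+g A_N)$): if $B_1\leq B_2$ are symmetric forms on a finite-dimensional inner product space, then their eigenvalues, arranged in the same monotonic order, satisfy $\lambda_j(B_1)\leq\lambda_j(B_2)$. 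Applied here with $B_1=I+g A_N$ and $B_2=\text{Hess}_\Sigma F$, the two eigenvalues of $\text{Hess}_\Sigma F$ are, in a suitable pairing, bounded below by $1+k_1 g(x,y)$ and $1+k_2 g(x,y)$, which is exactly the claim.

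The argument reduces to a one-step matrix inequality once the Hessian formula \eqref{hess} is in hand, so there is no genuine obstacle. The only substantive point is carefully tracking the sign of the coefficient in front of $\langle TX,X\rangle$ in \eqref{hess} and combining it with the positivity statement \eqref{positivo definidoo}; this is precisely where the profile-function hypothesis \eqref{ineq_f} enters, and it is what allows the $T$-term to be dropped rather than retained with a bad sign.
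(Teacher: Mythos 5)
Your proposal is correct and follows essentially the same route as the paper: both drop the $T$-term from \eqref{hess} using the sign hypothesis \eqref{ineq_f} together with \eqref{positivo definidoo}, reduce to the operator $I+g(x,y)A_N$ whose eigenvalues are $1+k_1g$ and $1+k_2g$, and conclude by eigenvalue monotonicity. The only difference is that you make the final min-max/Weyl step explicit, which the paper leaves implicit.
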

\begin{proof}
Suppose that $(f')^2+ff''+1\leq 0$, then using (\ref{hess}) and (\ref{positivo definidoo}), we have that
\begin{align*}
\text{Hess}_\Sigma F(X,X)&=\langle X,X\rangle+g(x,y)\langle A_NX,X\rangle-((f'(y))^2+f(y)f''(y)+1)\langle TX,X\rangle\\
&\geq \langle X+g(x,y)AX,X\rangle.
\end{align*}
But, the eigenvalues of $X\rightarrow X+g(x,y)AX$ are 
$$1+k_1g(x,y) \text{ and } 1+k_2g(x,y),$$
where $k_1\leq k_2$ are the eigenvalues of A. Then, $k_1$ and $k_2$ are the principal curvature of $\Sigma$ and the eigenvalues $\lambda_1\leq\lambda_2$ of Hess$_\Sigma F(x,y)$  satisfy that
$$\lambda_1\geq1+k_1g(x,y) \text{ and } \lambda_2\geq1+k_2g(x,y).$$
\end{proof}

\section{Gap results}\label{sec:3 dimensional}

This section aims to give a topological classification of CMC free boundary hypersurfaces in the rotational domains, as defined earlier. We employ a gap condition in the umbilicity tensor and the graph function whose rotation generates the boundary domain. We subdivide our analysis in the three-dimensional and higher dimensional cases.

\subsection{CMC Free boundary surfaces in \texorpdfstring{$3$-dimensional}{3-dimensional} rotational domains}
In this subsection, we get a topological characterization for CMC free boundary surfaces in $3$-dimensional rotation domains. 

The next proposition shows that the gap condition given bellow implies the convexity of $F$ on $\Sigma$, and the proof of the result follows the same steps as in \cite[Lemma 2.1]{Barbosa:2023}.
\begin{proposition}
\label{Hess positiva}
Let $\Sigma$ be a compact free boundary CMC surface in $\Omega$. Assume that $(f')^2+ff''+1\leq 0$ and for all points (x,y) in $\Sigma$,
\begin{align}
    \label{hipotese de phi}
    |\phi|^2g(x,y)^2\leq\frac{1}{2}(2+2Hg(x,y))^2,
\end{align}
where $\phi=A-H\langle\cdot,\cdot\rangle$ is the umbilicity tensor. Then,
$$Hess_\Sigma F(X,X)\geq 0,$$
for all $(x,y)\in \Sigma$ and $X\in T_{(x,y)}\Sigma.$
\end{proposition}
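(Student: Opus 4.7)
The plan is to combine Lemma \ref{autovalores}, which bounds the eigenvalues of $\text{Hess}_\Sigma F$ from below by $1 + k_1 g$ and $1 + k_2 g$ (with $k_1 \le k_2$ the principal curvatures of $\Sigma$), together with a clean algebraic reformulation of the pinching hypothesis \eqref{hipotese de phi}. Writing $a = 1 + k_1 g$ and $b = 1 + k_2 g$, it suffices to prove $a \ge 0$ and $b \ge 0$ throughout $\Sigma$.

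The first step is purely algebraic. Since $n = 2$, one has $k_1 + k_2 = 2H$ and therefore $|\phi|^2 = (k_1-H)^2 + (k_2-H)^2 = (k_1-k_2)^2/2$. A direct computation gives $a + b = 2 + 2Hg$ and $a - b = (k_1-k_2)g$, so the pinching hypothesis \eqref{hipotese de phi} is equivalent to $(a-b)^2 \le (a+b)^2$, that is,
$$(1 + k_1 g)(1 + k_2 g) \ge 0 \quad \text{on } \Sigma.$$
Thus at every point of $\Sigma$ the quantities $a$ and $b$ carry the same sign.

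The remaining task is to exclude the possibility that $a, b \le 0$ simultaneously at some interior point. On $\partial\Sigma$ the free boundary condition forces $g \equiv 0$, hence $a = b = 1 > 0$. A continuity argument on the connected compact surface $\Sigma$ (mirroring \cite[Lemma 2.1]{Barbosa:2023}) will propagate this positivity into the interior: if some interior point had $a < 0$, then along a continuous path to $\partial\Sigma$ the sum $a + b = 2 + 2Hg$ would first vanish, and at such a transition the product condition $ab \ge 0$ forces $a = b = 0$, so $k_1 = k_2 = H = -1/g$, $|\phi| = 0$, and equality holds in \eqref{hipotese de phi}.

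The main obstacle, and the heart of the argument, is showing that this equality configuration cannot persist so as to create a region where $a + b < 0$; this is where the global structure of $\Sigma$ together with the boundary condition $g|_{\partial\Sigma} = 0$ must be exploited, exactly as in \cite[Lemma 2.1]{Barbosa:2023}. Once $a + b \ge 0$ is known on $\Sigma$, combining it with $ab \ge 0$ yields $a, b \ge 0$ pointwise, and therefore $\text{Hess}_\Sigma F(X, X) \ge 0$ for every $(x,y) \in \Sigma$ and every $X \in T_{(x,y)}\Sigma$, as claimed.
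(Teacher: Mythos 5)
Your reduction is correct and, in fact, coincides with the paper's own first step: the identity $4(1+k_1g)(1+k_2g)=(2+2Hg)^2-2|\phi|^2g^2$ shows that the pinching hypothesis is exactly $ab\ge 0$ for $a=1+k_1g$, $b=1+k_2g$, so the whole problem reduces to proving $v:=a+b=2+2Hg\ge 0$, using that $v\equiv 2$ on $\partial\Sigma$ and that $a=b=0$ (hence $|\phi|=0$) at any interior zero of $v$ encountered along a path from a point where $v<0$. But at precisely this point you stop and write that the global structure of $\Sigma$ together with the boundary condition must be exploited ``exactly as in \cite{Barbosa:2023}.'' That deferred step is the entire analytic content of the proposition, and it is absent from your proposal; as written, the claim $v\ge 0$ is asserted, not proved.

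The paper closes this gap as follows, and your argument needs the same ingredients. First, one treats the totally umbilical case separately (your scheme requires this split, since the next step breaks down when every point is umbilic). If $\Sigma$ is CMC and not totally umbilical, its umbilic points are isolated (holomorphicity of the Hopf differential), so the transition point $p_0$ with $v(p_0)=0$ --- which is umbilic because $v(p_0)=0$ forces $g(p_0)\neq 0$ and then the pinching forces $|\phi|^2(p_0)=0$ --- is an \emph{isolated} umbilic point. One then takes a small annulus $\mathcal{A}=D_{r_0}(p_0)\setminus D_{\bar r_0}(p_0)$ containing no umbilic points and joins $\alpha(t_0-\epsilon)$ to $\alpha(t_0+\epsilon)$ by a path $\beta\subset\mathcal{A}$; since $v$ has opposite signs at the endpoints of $\beta$, it vanishes at some $\tilde q\in\mathcal{A}$, producing a second umbilic point in $D_{r_0}(p_0)$ and a contradiction. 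Without this isolated-zeros argument (or a substitute for it), your proposal does not establish $v\ge0$, so it has a genuine gap at its central step even though the algebraic framework and the target inequality are set up correctly.
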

\begin{proof}
By Lemma \ref{autovalores}, we have that the eigenvalues $\lambda_1\leq\lambda_2$ of Hess$_\Sigma F(x,y)$  satisfy that
$$\lambda_1\geq1+k_1g(x,y):=\tilde{\lambda_1} \text{ and } \lambda_2\geq1+k_2g(x,y):=\tilde{\lambda_2},$$
where $k_1$ and $k_2$ are the principal curvature of $\Sigma$. In order to prove Hess$_\Sigma F(X,X)\geq 0$, we need to show that $\lambda_1$ and $\lambda_2$ are nonnegative. Using condition (\ref{hipotese de phi}) we have
\begin{align}
    \label{eq lambda/gap}4\tilde{\lambda_1}\tilde{\lambda_2}&=4(1+k_1g(x,y))(1+k_2g(x,y))\nonumber\\
    &=4+4k_2g(x,y)+4k_1g(x,y)+4k_1k_2g(x,y)^2\nonumber\\
    &=4+8Hg(x,y)+4H^2g(x,y)^2+2(2H^2-|A|^2)g(x,y)^2\\
    &=(2+2Hg(x,y))^2-2|\phi|^2g(x,y)^2\geq0\nonumber.
\end{align}
Therefore, we need to show that at least one $\tilde{\lambda_i}$ is non-negative. For this, we will to show that function $v$ defined on $\Sigma$ and given by
$$v := \tilde{\lambda_1} + \tilde{\lambda_2} = 2 + 2Hg(x,y)$$
is nonnegative.
Note that we can assume that $\Sigma$ is not totally umbilical, otherwise it is obvious to check. Let us suppose that $v(p) < 0$ at some point $p\in\Sigma$. The free boundary condition ensures that
$$v = 2 + 2Hg(x,y)= 2$$
along $\partial\Sigma$. Choose $q\in\partial\Sigma$ and let $\alpha: [0, 1] \rightarrow \Sigma$ be a continuous curve such that $\alpha(0) = p$ and $\alpha(1) = q$. Since $v$ changes the signal along $\alpha$, there is a point $p_0 = \alpha(t_0),\ t_0\in(0, 1)$ such that $v(p_0) = 0$. In particular $g(x,y)(p_0)\neq 0$. Therefore, the condition (\ref{hipotese de phi}) implies that 
$$|\phi|^2(p_0)=0,$$
and hence $p_0$ is an umbilical point. Since $\Sigma$ is not a totally umbilical surface, we have that $p_0$ is an isolated point. So there is $\epsilon>0$ such that $v(\alpha(t)) < 0$, if $t\in[t_0-\epsilon, t_0)$ and $v(\alpha(t)) > 0$, if $t \in(t_0, t_0 +\epsilon]$, or vice-versa.

Let $D_{r_0}(p_0)$ be a geodesic disk with radius $r_0$ centered at $p_0$ such that $p_0$ is the
only umbilical point of $\Sigma$ on $D_{r_0}(p_0)$. We can choose $r_0$ and $\epsilon$ in such way that
$\alpha(t)\in D_{r_0}(p_0)$ for all $t\in [t_0-\epsilon, t_0 +\epsilon]$. Choose $\bar{r_0}<r_0$ such that $\alpha(t_0 -\epsilon),\ \alpha(t_0 +\epsilon)\notin D_{\bar{r_0}}(p_0)$. Let $\mathcal{A} = D_{r_0}(p_0) \setminus D_{\bar{r}_0}(p_0)$ be the annulus determined by these two discs and let $\beta$ denote a path in $\mathcal{A}$ joining the points $\alpha(t_0 -\epsilon)$ and $\alpha(t_0 +\epsilon)$. Again, $v$ changes the signal along of $\beta$, and therefore there is a point $\tilde{q}\in D_{r_0}(p_0)$
such that $v(\tilde{q}) = 0$. But, as above, it implies that $\tilde{q}$ is another umbilical point in $D_{r_0}(p_0)$ which is a contradiction and we conclude that $v \geq0$ as desired.

Then, $\lambda_i\geq\tilde{\lambda_i}\geq0$ for all $i$. Therefore $Hess_\Sigma F(X,X)\geq 0.$
\end{proof}

\begin{remark}
\label{obs vale a volta}
    Observe that, from (\ref{eq lambda/gap}) if we want to prove that the gap (\ref{hipotese de phi}) is valid, it is enough to show that $\tilde{\lambda_i}$ is non-negative for $i=1,2.$
\end{remark}

\begin{lemma}
\label{lemma weingarten}
Suppose that $(f')^2+ff''+1\leq 0$. Then the Weingarten operator $A^{\mathbb{R}^3}_{\partial\Omega}$ of $F^{-1}(1) = \partial\Omega$ in $\mathbb{R}^3$ with respect to inward unit normal satisfies
$$\langle A^{\mathbb{R}^3}_{\partial\Omega}X,X\rangle\geq k_1|X|^2>0,\ \forall X\in T\partial\Omega,\ x\neq0.$$

\end{lemma}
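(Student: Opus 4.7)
The plan is to reduce the statement to the diagonal form of $D^{2}F$ already recorded before (\ref{hess}), using the fact that on $\partial\Omega$ the function $F$ is constant, so its intrinsic Hessian there vanishes.

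First, I would reapply the general Hessian expansion in (\ref{hess}) with $\Sigma$ replaced by $\partial\Omega$ itself and $N$ replaced by the outward unit normal $\bar N = \bar\nabla F/|\bar\nabla F|$. Since $F\equiv 1$ on $\partial\Omega$, the left hand side is zero. This reduces the identity (before the substitution of the $T$-term) to
$$0 \;=\; D^{2}F(X,Y) \;+\; |\bar\nabla F|\,\langle A^{\mathrm{out}}_{\partial\Omega}X,Y\rangle, \qquad X,Y\in T\partial\Omega,$$
and flipping to the inward unit normal (which just negates the shape operator) gives the clean identity
$$\langle A^{\mathbb{R}^{3}}_{\partial\Omega}X,Y\rangle \;=\; \frac{D^{2}F(X,Y)}{|\bar\nabla F|}, \qquad X,Y\in T\partial\Omega.$$

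Next, I would read off a pointwise lower bound on $D^{2}F$ directly from its diagonal matrix displayed before (\ref{hess}): the three eigenvalues are $1$, $1$ and $-(f')^{2}-ff''$, and the hypothesis $(f')^{2}+ff''+1\leq 0$ is precisely the statement that the third eigenvalue is at least $1$. Hence $D^{2}F(X,X)\geq |X|^{2}$ for every $X\in\mathbb{R}^{3}$, and a fortiori for every $X\in T\partial\Omega$. Since on $\partial\Omega$ one has $|x|^{2}=f(y)^{2}$, the norm $|\bar\nabla F|^{2}=|x|^{2}+(ff')^{2}$ equals $f^{2}(1+(f')^{2})$, so $|\bar\nabla F|=f\sqrt{1+(f')^{2}}>0$. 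Combining these two ingredients,
$$\langle A^{\mathbb{R}^{3}}_{\partial\Omega}X,X\rangle \;\geq\; \frac{|X|^{2}}{f\sqrt{1+(f')^{2}}},$$
and by Remark \ref{explaining_fcond} the coefficient on the right is exactly the parallel curvature of $\partial\Omega$, which under the hypothesis is the smaller of the two principal curvatures and hence coincides with $k_{1}$ in the statement. Since $f>0$, this coefficient is strictly positive, yielding $\langle A^{\mathbb{R}^{3}}_{\partial\Omega}X,X\rangle\geq k_{1}|X|^{2}>0$ for $X\neq 0$.

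There is no real conceptual obstacle here; the lemma is essentially an immediate consequence of the explicit form of $D^{2}F$ recorded for this class of rotation domains. The only point that needs careful bookkeeping is orientation and normalization — dividing by $|\bar\nabla F|$ when passing from $\bar\nabla F$ to the unit normal, and flipping the sign to convert the outward shape operator (which appears naturally in (\ref{hess})) into the inward one required by the statement.
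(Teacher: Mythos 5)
Your argument is correct, and it is genuinely different from the one in the paper. The paper proves the lemma by a direct classical computation on the surface of revolution: it computes the Gaussian curvature $K=-\tfrac{ff''}{(1+(f')^2)^2f^2}$ and the mean curvature $H=\tfrac{1+(f')^2-ff''}{2f(1+(f')^2)^{3/2}}$, observes that the hypothesis forces $ff''<0$ so that $K>0$ and $H>0$, and concludes that both principal curvatures are positive. You instead exploit the level-set structure: applying the intermediate identity in \eqref{hess} to $\partial\Omega$ itself (where $\mathrm{Hess}_{\partial\Omega}F\equiv 0$) gives $\langle A^{\mathbb{R}^3}_{\partial\Omega}X,Y\rangle = D^2F(X,Y)/|\bar\nabla F|$ for the inward normal, and the hypothesis $(f')^2+ff''+1\le 0$ is exactly the statement that $D^2F\ge \mathrm{Id}$, while $|\bar\nabla F|=f\sqrt{1+(f')^2}$ on $\partial\Omega$. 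Your sign bookkeeping is right (with the paper's convention $A(X)=-\nabla_XN$, the outward operator picks up $-D^2F/|\bar\nabla F|$ and the inward one $+D^2F/|\bar\nabla F|$), and the identification of the resulting coefficient $1/(f\sqrt{1+(f')^2})$ with the smaller principal curvature $k_1$ is exactly Remark \ref{explaining_fcond}. What your route buys is a quantitative lower bound $k_1\ge 1/(f\sqrt{1+(f')^2})>0$ and a proof that reuses machinery already set up for \eqref{hess}, rather than recomputing $K$ and $H$ from the parametrization; what the paper's route buys is independence from the Hessian formalism and a self-contained verification that $\partial\Omega$ is convex. Both are complete proofs of the stated inequality.
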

\begin{proof}
We claim that both eigenvalues $k_1 \leq k_2$ of $A^{\mathbb{R}^3}_{\partial\Omega}$ are positive. Let $U\subset\mathbb{R}^2$ be an open set and $x : U \subset\mathbb{R}^2\rightarrow V \subset\partial\Omega$ the immersion 
$$x(\theta, t) = (\cos \theta f(t),\sin \theta f(t),t),\ (\theta, t) \in U.$$
A straightforward calculation shows that the Gaussian curvature of $\partial\Omega$ at $x(\theta, t)$ is
$$K(\theta, t) =-\frac{ff''}{(1+(f')^2)^2f^2}>0.$$
Hence, $K$ is strictly positive on $\partial\Omega$. In particular, $k_1$ and $k_2$ have the same sign.
Furthermore, a simple calculation gives us 
$$H=\frac{1+(f')^2-ff''}{2f(1+(f')^2)^{\frac{3}{2}}}>0.$$
Therefore $k_2 > 0$ and $k_1 > 0$.
Thus, for all $X\in  T \partial\Omega$ with $X\neq0$,
$$\langle A^{\mathbb{R}^3}_{\partial\Omega}X,X\rangle\geq k_1|X|^2 > 0.$$
\end{proof}

Now, we are in conditions to prove Theorem \ref{gap rotacional cmc n=3}. The argument follows the ideas of \cite{Ambrozio:2021}.
\begin{proof}[Proof of Theorem \ref{gap rotacional cmc n=3}]. 
First, we claim that the geodesic curvature $k_g$ of $\partial\Sigma$ in $\Sigma$ is positive. In fact, given $X,Y\in T\partial\Sigma$, we have on $\partial\Sigma$ that
$$\nabla_{X}^{\mathbb{R}^3}Y=\nabla_X^{\partial\Omega}Y+\langle A_{\partial\Omega}^{\mathbb{R}^3} X,Y\rangle\bar{N}=\nabla_X^{\partial\Sigma}Y+\langle A_{\partial\Sigma}^{\partial\Omega} X,Y\rangle N+\langle A_{\partial\Omega}^{\mathbb{R}^3} X,Y\rangle\bar{N}$$
and 
$$\nabla_{X}^{\mathbb{R}^3}Y=\nabla_X^{\Sigma}Y+\langle A_{\Sigma}^{\mathbb{R}^3} X,Y\rangle N=\nabla_X^{\Sigma}Y+\langle A_{\partial\Sigma}^{\Sigma} X,Y\rangle \bar{N}+\langle A_{\Sigma}^{\mathbb{R}^3} X,Y\rangle N.$$
Then, we will have $A^\Sigma_{\partial\Sigma}= A^{\mathbb{R}^3}_{\partial\Omega}$ on $\partial\Sigma$, where $\partial\Omega= F^{ -1}(1)$. Hence, if $X\in T \partial\Sigma$ is unitary, it follows from Lemma \ref{lemma weingarten} that 
\begin{align}
\label{Curvatura geodésica}
   k_g = \langle A^\Sigma_{\partial\Sigma}X,X\rangle=\langle A^{\mathbb{R}^3}_{\partial\Omega}X,X\rangle> 0.
\end{align}

Now, observe that if either $\Sigma$ is totally umbilical or $\Sigma$ has nonnegative Gaussian
curvature everywhere, then $\Sigma$ is homeomorphic to a disk. In fact, if $\Sigma$ is totally umbilical, we have that the Gaussian curvature $K_\Sigma$ of $\Sigma$ satisfies
$$K_\Sigma= H ^2 \geq 0.$$
Then, in any case, $\Sigma$ has nonnegative Gaussian curvature everywhere. From
the Gauss-Bonnet theorem and \eqref{Curvatura geodésica} , it follows
$$\int_\Sigma K_\Sigma+\int_{\partial\Sigma}k_g=2\pi\mathcal{X}(\Sigma)>0,$$
which shows that
$$\mathcal{X}(\Sigma)=2-2\hat{g}-r>0,$$
where $\hat{g}$ and $r$ are respectively the genus and quantity connected components of $\Sigma$. Then, $\hat{g}=0$ and $r=1$. Therefore, $\mathcal{X}(\Sigma) = 1$, $\Sigma$ is orientable and has exactly one boundary component. Thus, $\Sigma$ is homeomorphic to a disk.

Therefore, from now on, let us assume that $\Sigma$ is not a totally umbilical surface
and has negative Gaussian curvature at some point of $\Sigma$ and consider $$\mathcal{C} = \{p\in\Sigma; F(p) = \min_{ x\in\Sigma} F(x)\}.$$ 

Given $p, q\in \mathcal{C}$, let $\gamma: [0, 1] \rightarrow\Sigma$ be a geodesic such that $\gamma(0) = p$ and $\gamma(1) = q$. It
follows from Proposition \ref{Hess positiva} $\text{Hess}_\Sigma F \geq 0$ on $\Sigma$. Then, 
$$\frac{d^2}{dt^2}(F\circ\gamma)=\text{Hess}_\Sigma F\left(\frac{d\gamma}{dt},\frac{d\gamma}{dt}\right) \geq 0$$
for all $t\in [0, 1]$. Since $p,q\in \mathcal{C}$, we have
$$\frac{d}{dt}(F\circ\gamma)(0)=\frac{d}{dt}(F\circ\gamma)(1)=0, $$
which implies that $F$ is constant on $\gamma$ by the maximum principle. Then, we conclude that $(F \circ\gamma)(t) \equiv \text{min}_{\Sigma} F.$
Therefore, $\gamma([0, 1]) \subset\mathcal{C}$ and $\mathcal{C}$ must be a totally convex subset of $\Sigma$. In particular, totally convex property of $\mathcal{C}$ also assures that $\gamma([0, 1]) \subset\mathcal{C}$ for all geodesic loop $\gamma: [0, 1]\rightarrow\Sigma$, based at a point $p\in\mathcal{C}$.
Moreover, using \eqref{Curvatura geodésica} we assures that each geodesic $\gamma$ which connect two points in $\mathcal{C}$ is completely
inside of $\Sigma$, that is, the trace of $\gamma$ does not have points of $\partial\Sigma$. Hence, $\mathcal{C}$ is contained in the interior of $\Sigma$.

Finally, we claim that $\Sigma$ is homeomorphic to either a disk or an annulus. To see this,
we divide into two cases:\\

Case $1$: $\mathcal{C}$ consists of a single point.

Case $2$: $\mathcal{C}$ contains more than one point.\\

For Case $1$, let $p\in\Sigma\setminus\partial\Sigma$ be the only point of $\mathcal{C}$. Suppose that there is a non-trivial homotopy class $[\alpha] \in\pi_1(\Sigma, p)$, then we can find a geodesic loop $\gamma: [0, 1]\rightarrow\Sigma$, $\gamma(0) = \gamma(1) = p$ with $\gamma\in [\alpha]$. But, since $\mathcal{C}$ is totally convex, $\gamma([0, 1])\subset\mathcal{C}$ and, in particular, $\mathcal{C}$ has more than one point, which is a contradiction. This implies that $\pi_1(\Sigma, p)$ is trivial. Thus,  $\Sigma$ is simply connected and we conclude that $\Sigma$ is homeomorphic to a disk.

For Case $2$, we may assume that $\Sigma$ is not homeomorphic to a disk. Given $p\in \mathcal{C}$ we can find a
geodesic loop $\gamma : [0,1]\rightarrow\Sigma$, $\gamma(0) = \gamma(1)=p$ belonging to a non-trivial homotopy class $[\alpha]\in \pi_1(\Sigma, p)$. The totally convexity of $\mathcal{C}$ ensures that $\gamma([0,1])\subset\mathcal{C}.$ 
We claim that $\gamma$ is a regular curve. Indeed, if $\gamma'(0)\neq \gamma'(1)$, we can choose $\epsilon_0 > 0$ small and for each $\epsilon<\epsilon_0$ consider the minimizing geodesic $\tilde{\gamma}_\epsilon$ joining $\gamma(1 - \epsilon)$ and $\gamma(0 + \epsilon)$.
Since $\mathcal{C}$ is totally convex and $\gamma\subset\mathcal{C}$, we conclude that $\tilde{\gamma}_\epsilon\in \mathcal{C}.$ Now, we can choose an nonempty open set $U\subset\{\tilde{\gamma}_\epsilon\}_{\epsilon<\epsilon_0}$ of $\mathcal{C}$. Thus, for any geodesic $\beta(t)\in U$,
$$0=\frac{d^2}{dt^2}(F\circ\beta)=\text{Hess}_\Sigma F\left(\frac{d\beta}{dt},\frac{d\beta}{dt}\right)\geq 0.$$
Therefore, $\text{Hess}_\Sigma F\left(\frac{d\beta}{dt},\frac{d\beta}{dt}\right)=0$ in $U$.
By the proof of Lemma \ref{autovalores} and Proposition \ref{Hess positiva}
$$0=\text{Hess}_\Sigma F(e_i,e_i)\geq 1+\langle\Bar{\nabla}F,N\rangle k_i\geq 0.$$
Then,
$$1+\langle\Bar{\nabla}F,N\rangle k_1=1+\langle\Bar{\nabla}F,N\rangle k_2=0,$$
and we get that $k_1=k_2$ in $U$. Thus, the open subset $U$ is totally umbilical, which shows that $\Sigma$ must be totally umbilical which is a contradiction. Therefore $\mathcal{C}$ has to be equal to the unique closed geodesic $\gamma$. Since $[\alpha]$ was chosen to be arbitrary, this implies that $\pi_1(\Sigma, p) \approx \mathbb{Z}$ and $\Sigma$ is homeomorphic to an annulus.

\end{proof}

\subsection{Minimal free boundary surfaces in \texorpdfstring{$(n+1)$-dimensional}{(n+1)-dimensional} rotational domains }\label{sec:n+1 dimensional}

In this subsection, let us consider a rotational hypersurface in the following sense. Let $\alpha(t) =(f(t), t)$ be a plane curve $\alpha$ that is the graph of a positive real valued smooth function $f : I \rightarrow \mathbb{R}$ in the $x_1x_{n+1}$-plane. Let $\theta$ be a parametrization of the $n-$dimensional unit sphere in the hyperplane $x_{n+1} = 0$. The hypersurface of revolution with generatriz $\alpha$ can be parametrized by
$$X(\theta, t) = (\theta f(t), t).$$
In this scope, we study minimal free boundary surfaces in domains $\Omega$ which boundary is a hypersurface of revolution. Let us denote $x = (x_1,x_2,...,x_n)$ and $y = x_{n+1}$. Let $F : \mathbb{R}^{n+1} = \mathbb{R}^n\times\mathbb{R} \rightarrow \mathbb{R}$ be the smooth function defined by
$$F(x, y) = \frac{1}{2}\left(|x|^2-f(y)^2\right) + 1,$$
we have that $\partial\Omega\subset F^{-1}(1).$ Observe that, analogous to what was done in the previous section for dimension 3, denoting by $\Sigma$ a minimal free boundary surface in $\Omega$, we have
$$\nabla F(x,y)=(x,-f(y)f'(y))=(x,y)+(0,-y-f(y)f'(y)),$$
where $y=\langle (x,y),E_{n+1}\rangle.$ Then, for all $X,Y \in T(\Sigma)$ we have
\begin{align*}
    \text{Hess}_\Sigma F(X,Y)&=\langle X,Y\rangle+g(x,y)\langle A_NX,Y\rangle-((f'(y))^2+f(y)f''(y)+1)\langle TX,Y\rangle,
\end{align*}
where $T:T_{(x,y)})\Sigma\rightarrow T_{(x,y)}\Sigma$ is given by $TX=\langle X,E_{n+1}^\top\rangle E_{n+1}^\top$ and
$$g(x,y)=\langle\bar{\nabla} F,N\rangle=\langle(x,y),N\rangle+\langle N,E_{n+1}\rangle(-y-f(y)f'(y)).$$
We can write
\begin{align}
\label{hess n dimensional}
    \text{Hess}_\Sigma F(X,X)&=\langle X,X\rangle+\langle A(X,X),(\nabla F)^\bot\rangle-((f'(y))^2+f(y)f''(y)+1)\langle TX,Y\rangle,
\end{align}
It is easy to check that $T$ is a self adjoint operator whose $E_{n+1}^\top$ is an eigenvector
associated with the eigenvalue $|E_{n+1}^\top|^2$. Besides, we can take any nonzero vector in
$T\Sigma$, orthogonal to $E_{n+1}^\top$, to verify that zero is also an eigenvalue of $T$. Therefore
\begin{align*}
    0 \leq \langle TX, X\rangle \leq |E_{n+1}^\top|^2|X|^2,\ \forall X \in T_{(x,y)}\Sigma.
\end{align*}

\begin{lemma}\cite[Chen]{Chen:1973}
\label{lema algebrico}
Let $a_1,..., a_n$ and $b$ be real numbers. If
$$\sum_{i=1}^n a_i^2\leq\frac{(\sum_{i=1}^n a_i)^2}{n-1} -\frac{b}{n-1},$$
then $2a_ia_j\geq\frac{b}{n-1}$ for every $i, j\in\{1, . . . , n\}$.
\end{lemma}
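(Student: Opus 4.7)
The plan is to reduce to the case of a single pair $i \neq j$ (the conclusion for $i=j$, if intended, can be derived by a similar argument or excluded as a standard convention) and exploit Cauchy--Schwarz on the complementary variables. Fix indices $i \neq j$; after relabeling we may assume $i=1$, $j=2$. Set
\[
p = a_1 + a_2, \qquad q = \sum_{k=3}^{n} a_k, \qquad Q = \sum_{k=3}^{n} a_k^{2}.
\]
Cauchy--Schwarz (applied to the $n-2$ remaining variables) gives $Q \geq q^{2}/(n-2)$, and the identity $a_1^{2}+a_2^{2} = p^{2} - 2a_1 a_2$ is the other key ingredient.

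Next, I rewrite the hypothesis in these variables. Multiplying through by $n-1$ yields
\[
(n-1)\bigl(a_1^{2}+a_2^{2}\bigr) + (n-1)Q + b \;\leq\; (p+q)^{2}.
\]
Substituting $a_1^{2}+a_2^{2} = p^{2} - 2a_1 a_2$ and $Q \geq q^{2}/(n-2)$, and then expanding $(p+q)^{2} = p^{2} + 2pq + q^{2}$, converts the inequality into a statement purely in $p$, $q$, $a_1 a_2$ and $b$. The main algebraic step is to collect terms: after clearing the $n-2$ denominator, everything except the $a_1 a_2$ contribution combines into
\[
\bigl[(n-2)p - q\bigr]^{2} + (n-2)\,b \;\leq\; 2(n-1)(n-2)\,a_1 a_2,
\]
which immediately gives $2 a_1 a_2 \geq b/(n-1)$ since the square is nonnegative.

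The main (minor) obstacle is spotting the perfect square $[(n-2)p - q]^{2}$; once one anticipates its appearance the manipulation is routine. The proof uses no property of $b$ beyond its role as a constant, and the case $n=2$ is degenerate but consistent, since there the hypothesis reduces directly to $2a_1 a_2 \geq b$. No deep input is needed beyond Cauchy--Schwarz.
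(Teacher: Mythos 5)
Your argument is correct, and it is worth noting that the paper offers no proof at all of this lemma --- it is simply cited from Chen's book --- so any comparison is with the standard textbook argument rather than with anything in the text. Your computation checks out: multiplying the hypothesis by $n-1$, writing $a_1^2+a_2^2=p^2-2a_1a_2$, bounding $(n-1)Q\geq (n-1)q^2/(n-2)$, and collecting terms does yield
\[
\frac{\bigl[(n-2)p-q\bigr]^2}{n-2}+b\;\leq\;2(n-1)\,a_1a_2,
\]
from which the conclusion follows, and the $n=2$ case is handled directly as you say. The usual proof is a slightly slicker packaging of the same idea: apply Cauchy--Schwarz to the $n-1$ numbers $a_1+a_2,a_3,\dots,a_n$ to get $\bigl(\sum_i a_i\bigr)^2\leq (n-1)\bigl((a_1+a_2)^2+\sum_{k\geq 3}a_k^2\bigr)$, and combine with the hypothesis to obtain $a_1^2+a_2^2+\tfrac{b}{n-1}\leq (a_1+a_2)^2$ in one line; your perfect square $[(n-2)p-q]^2$ is precisely the equality defect in that Cauchy--Schwarz step, so the two routes are equivalent, yours just being more computational. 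One substantive caveat: the conclusion for $i=j$ is \emph{not} derivable by a similar argument --- it is false as stated. For instance, $n=3$, $a_1=0.1$, $a_2=a_3=1$, $b=0.3$ satisfies the hypothesis but $2a_1^2=0.02<b/(n-1)=0.15$. The lemma must be read with $i\neq j$ (which is how Chen states it and how the paper uses it, namely with $b=0$ to conclude that the $\tilde\lambda_i$ all have the same sign); your hedge ``excluded as a standard convention'' is the right reading, but the alternative you offer in the same parenthesis is not available.
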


The next proposition shows that the gap condition given bellow implies the convexity of $F$ on $\Sigma$.

\begin{proposition}
\label{Hess positiva dim alta}
Let $\Sigma^n$ be a minimal free boundary hypersurface $n$-dimensional in $\Omega$, with $n\geq3$. Assume that $(f')^2+ff''+1\leq 0$. If
\begin{align}
    \label{gap dimensão alta 1}
    |\nabla F^\bot|^2|A(x,y)|^2\leq \frac{n}{n-1},
\end{align}
for every $(x,y)\in\Sigma^n$. Then,
$$Hess_\Sigma F(X,X)\geq 0,$$
for all $(x,y)\in \Sigma$ and $X\in T_{(x,y)}\Sigma.$
\end{proposition}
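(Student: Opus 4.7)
The plan is to reduce the statement to showing that the symmetric endomorphism $I+gA_{N}$ is positive semi-definite at every point, and then to extract this from Chen's algebraic lemma applied to the shifted principal curvatures $\mu_i := 1+gk_i$.

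First step: Specializing the formula (\ref{hess n dimensional}) to $Y=X$, the hypothesis $(f')^2+ff''+1\leq 0$ combined with $\langle TX,X\rangle\geq 0$ makes the final term nonnegative. Since $(\bar\nabla F)^{\bot}=gN$, this yields
$$\text{Hess}_\Sigma F(X,X)\;\geq\;|X|^2+\langle A(X,X),(\bar\nabla F)^{\bot}\rangle\;=\;\langle(I+gA_N)X,X\rangle.$$
It therefore suffices to show that every eigenvalue $\mu_i=1+gk_i$ of $I+gA_N$ is nonnegative, where $k_1,\dots,k_n$ denote the principal curvatures of $\Sigma$.

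Second step: Because $\Sigma$ is minimal, $\sum_i k_i=0$, so $\sum_i\mu_i=n$. The gap hypothesis says $g^2|A|^2\leq n/(n-1)$, hence after expanding,
$$\sum_{i=1}^{n}\mu_i^{\,2}\;=\;n+2g\sum_{i}k_i+g^2|A|^2\;\leq\;n+\frac{n}{n-1}\;=\;\frac{n^{2}}{n-1}\;=\;\frac{(\sum_{i}\mu_i)^{2}}{n-1}.$$

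Third step: Apply Lemma \ref{lema algebrico} with $a_i=\mu_i$ and $b=0$: the previous display is exactly the hypothesis of the lemma in the case $b=0$, so the conclusion gives $2\mu_i\mu_j\geq 0$ for all $i,j$. This forces the $\mu_i$ to share a common sign, and since $\sum_{i}\mu_i=n>0$ they must all be nonnegative. Therefore $I+gA_N$ is positive semi-definite and the Hessian inequality follows.

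The main (modest) obstacle is recognizing the correct substitution. Working directly with $a_i=gk_i$ (so $\sum a_i=0$) and Chen's lemma only gives $2a_ia_j\geq -n/(n-1)$, which does not immediately imply $a_i\geq -1$; by contrast, shifting to $\mu_i=1+gk_i$ so that $\sum\mu_i=n>0$ makes the lemma apply with $b=0$, and the positive-sum constraint then upgrades "common sign" to "all nonnegative" automatically.
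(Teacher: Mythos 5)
Your proof is correct and follows essentially the same route as the paper: reduce to the operator $I+gA_N$ via the sign conditions on the last term of the Hessian formula, use minimality to get $\sum\mu_i=n$ and the gap to get $\sum\mu_i^2\leq n^2/(n-1)$, and invoke Chen's lemma with $b=0$ to force all $\mu_i\geq 0$. The only cosmetic difference is that you diagonalize $A_N$ (so $\sum_i(gk_i)^2=g^2|A|^2$ exactly), whereas the paper works in an eigenbasis of $\mathrm{Hess}_\Sigma F$ and uses Cauchy--Schwarz to bound $\sum_i\langle A(e_i,e_i),(\nabla F)^\bot\rangle^2$ by $|\nabla F^\bot|^2|A|^2$; both are valid.
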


\begin{proof}
Suppose that $(f')^2+ff''+1\leq 0$, then using (\ref{hess n dimensional}) we get
\begin{align}
    \label{hess caso particular}
    \text{Hess}_\Sigma F(X,X)&\geq\langle X,X\rangle+\langle A(X,X),(\nabla F)^\bot\rangle.
\end{align}
Let $\{e_1, . . . , e_n\}$ be an orthonormal basis of eigenvectors of $\text{Hess}_\Sigma F$ at $(x,y)\in\Sigma$ with respective eigenvalues $\lambda_1, . . . , \lambda_n$. We want to show
that $\lambda_i \geq 0$ for every $i$. By (\ref{hess caso particular}), $\lambda_i\geq \tilde{\lambda_i}:= 1+\langle A(e_i,e_i),(\nabla F)^\bot\rangle$ and joining with (\ref{gap dimensão alta 1}) we get
\begin{align*}
    \sum_{i=1}^n \tilde{\lambda_i}^2&=n+2\sum_{i=1}^n\langle A(e_i,e_i),(\nabla F)^\bot\rangle+\sum_{i=1}^n\langle A(e_i,e_i),(\nabla F)^\bot\rangle^2\\
    &\leq n+|\nabla F^\bot|^2\sum_{i=1}^n|A(e_i,e_i)|^2\\
    &\leq n++|\nabla F^\bot|^2|A|^2\leq n+\frac{n}{n-1}=\frac{n^2}{n-1}.
\end{align*}
On the other hand, we have that $( \sum_{i=1}^n \tilde{\lambda_i})^2=n^2$ since $\Sigma^n$ is minimal. Then
$$ \sum_{i=1}^n \tilde{\lambda_i}^2\leq \frac{( \sum_{i=1}^n \tilde{\lambda_i})^2}{n-1}.$$
By Lemma \ref{lema algebrico}, where $\tilde{\lambda_i} = a_i$ and $b = 0$, we get that $2\tilde{\lambda_i}\tilde{\lambda_j} \geq 0$. Consequently, the eigenvalues $\tilde{\lambda_i}$, $i = 1, . . . , n$, have all the same sign. Since $ \sum_{i=1}^n \tilde{\lambda_i}=n$, we conclude that $\tilde{\lambda_i} \geq 0$ for every $i$. Therefore, $\lambda_i\geq\tilde{\lambda_i}\geq0$ for every $i$. Then
$$Hess_\Sigma F(X,X)\geq 0,$$
for all $(x,y)\in \Sigma$ and $X\in T_{(x,y)}\Sigma.$
\end{proof}


\begin{proof}[Proof of Theorem \ref{gaphighdim}]
Firstly, let us define $\mathcal{C} = \{(x,y) \in\Sigma: F(x,y) = \text{min}_\Sigma F\}.$  From Proposition \ref{Hess positiva dim alta}, 
$$Hess_\Sigma F(X,X)\geq 0,$$
for all $(x,y)\in \Sigma$ and $X\in T_{(x,y)}\Sigma.$
The convexity of $Hess_\Sigma F$ strongly restricts the set $\mathcal{C}$ and the topology of $\Sigma$. We first prove that $\mathcal{C}$ is a totally convex set of $\Sigma$. As in the proof of Theorem \ref{gap rotacional cmc n=3}, the convexity of $\text{Hess}F$ restricted to $\Sigma$ implies that the set $\mathcal{C}$ is a totally convex set of $\Sigma$.

From now on, the proof follows the same line as in \cite[Theorem 3.7]{barbosaviana2} that uses standard Morse's theory. 
If $\mathcal{C}= \{(x_0,y_0)\}$, for some $(x_0,y_0)\in\Sigma$, $\Sigma$ is diffeomorphic to a disk $\mathbb{D}^n.$ If $\mathcal{C}$ contains more than one point we can show that $\text{dim}(\mathcal{C}) = 1$ and $\mathcal{C}$ is a geodesic. In this case, $\mathcal{C}$ is not a closed geodesic (what would imply that $\Sigma$ is diffeomorphic to a disk) or is a closed geodesic (what would force $\Sigma$ to be diffeomorphic to $\mathbb{S}^1\times\mathbb{D}^{n-1}$).

\end{proof}

\section{Examples of CMC free boundary surfaces in the rotational ellipsoid}

In this section, we show that there are a catenoid and some portions Delaunay surfaces that are free boundary on the rotational ellipsoid
\begin{align}
    \label{elipsoide rotacional}
    a^2x^2+a^2y^2+b^2z^2=R^2,
\end{align}
with $a^2\leq b^2$ and some constant $R^2$, and satisfy the pinching condition (\ref{hipotese de phi}).

\begin{remark}
\label{obs para f no elipsoide}
    Let us consider 
    $$f(y)=\frac{b}{a}\sqrt{\left(\frac{R}{b}\right)^2-y^2},$$
in (\ref{def F}). Then, we obtain the rotational ellipsoid given by (\ref{elipsoide rotacional}). In this case, the hypothesis $(f')^2+ff''+1\leq 0$ is automatically satisfied. In fact, we have
$$f'(y)=-\frac{yb}{a\sqrt{\left(\frac{R}{b}\right)^2-y^2}}$$ 
and
$$f''(y)=-\frac{R^2}{ab\left(\left(\frac{R}{b}\right)^2-b^2\right)^{\frac{3}{2}}}.$$
Therefore,
\begin{align*}
    (f')^2+ff''+1&=-\frac{y^2b^2}{a^2\left(\left(\frac{R}{b}\right)^2-y^2\right)}+\frac{b\sqrt{\left(\frac{R}{b}\right)^2-y^2}}{a}\left(-\frac{R^2}{ab\left(\left(\frac{R}{b}\right)^2-b^2\right)^{\frac{3}{2}}}\right)+1\\
    &=\frac{(a^2-b^2)\left(\left(\frac{R}{b}\right)^2-y^2\right)}{a^2\left(\left(\frac{R}{b}\right)^2-y^2\right)}=\frac{a^2-b^2}{a^2}\leq 0.
\end{align*}

\end{remark}

First, let us consider a smooth curve parametrized by arc length in
the $xz$-plane $\beta(s) = (x(s), 0, z(s))$, with $x(s) > 0$ and denote by $\Sigma$ the surface obtained by rotation
of $\beta$ around the $z$-axis.

We start presenting a lemma with sufficient conditions for a general
rotational surface to satisfy the pinching condition (\ref{hipotese de phi}) in the rotational ellipsoid.

\begin{lemma}
\label{lema do gap}
Suppose that the curve $\beta$ satisfies the following conditions
\begin{align}
    \label{h1}
    -1\leq x''(s)\left(x(s)-\frac{x'(s)}{z'(s)}z(s)\frac{b^2}{a^2}\right),\ \text{if } z'(s)\neq0, 
\end{align}
\begin{align}
    \label{h2}
    -1\leq z(s)z''(s)\frac{b^2}{a^2},\ \text{if } z'(s)=0,
\end{align}
\begin{align}
    \label{h3}
    -x(s)x'(s)^2\leq z'(s)x'(s)z(s)\frac{b^2}{a^2},
\end{align}
with $a^2\leq b^2$. Then, $\Sigma$ satisfies the pinching condition
$$ |\phi|^2g(x,y)^2\leq\frac{1}{2}(2+2Hg(x,y))^2,$$
on the rotational ellipsoid given in (\ref{elipsoide rotacional}).
\end{lemma}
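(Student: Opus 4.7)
The plan is to bypass a head-on attack on the inequality $|\phi|^{2}g^{2}\le \tfrac12(2+2Hg)^{2}$ and instead invoke Remark \ref{obs vale a volta}, which tells us that it suffices to check
\[
\tilde\lambda_{i}\;:=\;1+k_{i}\,g\;\ge\;0,\qquad i=1,2,
\]
where $k_{1},k_{2}$ are the principal curvatures of $\Sigma$ and $g=\langle\bar\nabla F,N\rangle$. The whole lemma then reduces to showing that the hypotheses (h1)--(h3) are, after a direct computation, exactly these two nonnegativity statements (with the meridional one split into two cases).

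First I would set up the rotational parametrization $X(s,\theta)=(x(s)\cos\theta,x(s)\sin\theta,z(s))$, use the arclength condition $x'^{2}+z'^{2}=1$, and record the unit normal $N=(-z'\cos\theta,-z'\sin\theta,x')$ together with the classical principal curvatures of a surface of revolution,
\[
k_{1}=x'z''-z'x'',\qquad k_{2}=\frac{z'}{x}.
\]
For the ellipsoid, from $f(y)=\tfrac{b}{a}\sqrt{(R/b)^{2}-y^{2}}$ one gets $f(y)f'(y)=-\tfrac{b^{2}}{a^{2}}y$, so $\bar\nabla F$ restricted to $\Sigma$ equals $(x\cos\theta,x\sin\theta,\tfrac{b^{2}}{a^{2}}z)$. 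Taking its inner product with $N$ yields the clean expression
\[
g\;=\;-xz'+\frac{b^{2}}{a^{2}}\,z\,x'.
\]

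Next I would handle the parallel direction. A direct multiplication gives $k_{2}g=-z'^{2}+\tfrac{b^{2}}{a^{2}}\tfrac{zz'x'}{x}$, and using $1-z'^{2}=x'^{2}$ together with $x>0$ we see that $1+k_{2}g\ge 0$ is equivalent, after clearing the denominator $x$, to the inequality (h3). For the meridional direction I would split according to whether $z'$ vanishes. When $z'\neq 0$, the arclength identity $x'x''+z'z''=0$ lets me rewrite $k_{1}=-x''/z'$, and then
\[
k_{1}g \;=\; x''\!\left(x-\frac{x'}{z'}z\,\frac{b^{2}}{a^{2}}\right),
\]
so $1+k_{1}g\ge 0$ is exactly (h1). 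When $z'=0$ we have $|x'|=1$ and $k_{1}=z''/x'$, and then $k_{1}g=\tfrac{b^{2}}{a^{2}}zz''$, so $1+k_{1}g\ge 0$ is (h2). Invoking Remark \ref{obs vale a volta} in all cases yields the pinching condition.

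The computations are routine; the only point requiring a little care is the sign convention for $N$, but this turns out to be harmless, since reversing $N$ reverses both $k_{i}$ and $g$ simultaneously and leaves the products $k_{i}g$ (hence the conclusion) invariant. I do not anticipate any genuine obstacle beyond the bookkeeping above.
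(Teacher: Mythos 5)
Your proposal is correct and follows essentially the same route as the paper's own proof: reduce via Remark \ref{obs vale a volta} to the nonnegativity of $\tilde\lambda_i=1+k_ig$, compute $g=-xz'+\tfrac{b^2}{a^2}zx'$ and the principal curvatures of the surface of revolution, and identify the resulting inequalities (splitting the meridional case according to whether $z'$ vanishes) with (\ref{h1})--(\ref{h3}). All computations match those in the paper.
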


\begin{proof}
From Remark \ref{obs vale a volta} , it suffices to show that
$$\Tilde{\lambda}_1=1+k_1g(x,y)\geq 0 \text{ and }\title{\lambda}_2=1+k_2g(x,y)\geq0.$$
Let us consider $X:[s_1,s_2]\times \mathbb{S}^1\rightarrow \mathbb{R}^3$ given by
$$X(s,\theta)=(x(s)\cos(\theta),x(s)\sin(\theta),z(s)),$$
obtained by rotation of $\beta$ around the $z$-axis. Therefore,
$$X_s(s,\theta)=(x'(s)\cos(\theta),x'(s)\sin(\theta),z'(s))$$
and
$$X_\theta(s,\theta)=(-x(s)\sin(\theta),x(s)\cos(\theta),0).$$
Then, a straightforward computation shows that
$$N=(-z'(s)\cos(\theta),-z'(s)\sin(\theta),x'(s)).$$
Thus,
\begin{align}
\label{posição com N}
    \langle (x,y),N\rangle&=-x(s)z'(s)\cos^2(\theta)-x(s)z'(s)\sin^2(\theta)+x'(s)z(s)\nonumber\\
    &=-x(s)z'(s)+x'(s)z(s).
\end{align}
From (\ref{posição com N}) and Remark \ref{obs para f no elipsoide}, we get 
\begin{align}
\label{g(x,y)}
    g(x,y)&=\langle \nabla F,N\rangle\nonumber\\
    &=\langle (x,y),N\rangle -\langle N,E_3\rangle(y+f(y)f'(y))\nonumber\\
    &=-x(s)z'(s)+x'(s)z(s) -x'(s)\left(z(s)-\frac{b^2}{a^2}z(s)\right)\nonumber\\
    &=-x(s)z'(s)+x'(s)z(s)\frac{b^2}{a^2}. 
\end{align}
A straight forward computation shows that
\begin{align}
    \label{curvaturas k1 e k2}
    k_1=x'(s)z''(s)-x''(s)z'(s)\text{ and }k_2=\frac{z'(s)}{x(s)}.
\end{align}
If $z'(s)\neq 0,$ we can write 
\begin{align}
    \label{curvatura k1 z' não nulo}
    k_1(s)=-\frac{x''(s)}{z'(s)}.
\end{align}
Then, using (\ref{g(x,y)}) and (\ref{h1})
\begin{align*}
\Tilde{\lambda}_1&=1+k_1g(x,y)\\  
&= 1-\frac{x''(s)}{z'(s)}\left(-x(s)z'(s)+x'(s)z(s)\frac{b^2}{a^2}\right)\\
&=1+x''(s)\left(x(s)-\frac{x'(s)}{z'(s)}z(s)\frac{b^2}{a^2}\right)\geq0. 
\end{align*}
If $z'(s)=0,$ since the curve is parameterized by the arc length, then $x'(s)^2=1$. Using (\ref{g(x,y)}) and (\ref{h2}), we get
\begin{align*}
\Tilde{\lambda}_1&=1+k_1g(x,y)\\  
&= 1+\left(x'(s)z''(s)-x''(s)z'(s)\right)\left(-x(s)z'(s)+x'(s)z(s)\frac{b^2}{a^2}\right)\\
&=1+z''(s)z(s)\frac{b^2}{a^2}\geq0. 
\end{align*}
Finally, using again that the curve is parameterized by the arc length, together with (\ref{g(x,y)}) and (\ref{h3}), we obtain
\begin{align*}
\Tilde{\lambda}_2(s)&= 1+k_2g(x,y)\\
&= 1+\frac{z'(s)}{x(s)}\left(-x(s)z'(s)+x'(s)z(s)\frac{b^2}{a^2}\right)\\
&=\frac{x(s)-x(s)z'(s)^2+z'(s)x'(s)z(s)\frac{b^2}{a^2}}{x(s)}\\
&=\frac{x(s)x'(s)^2+z'(s)x'(s)z(s)\frac{b^2}{a^2}}{x(s)}\geq0.
\end{align*}
Therefore, $\tilde{\lambda}_1(s)\geq0$ and $\tilde{\lambda}_2(s)\geq0$ as desired.
\end{proof}

The function 
\begin{align}
        \label{funçao rho}
        \rho(s)=x(s)-\frac{x'(s)}{z'(s)}z(s)\frac{b^2}{a^2}.
    \end{align}
that appears in (\ref{h1}) has an important
geometric meaning. In fact, if $\rho(s_ 0) = 0$, then we can proof that $\Sigma$ is orthogonal to the rotational ellipsoid $E$ given by
    \begin{align*}
        a^2x^2+a^2y^2+b^2z^2=R^2,
    \end{align*}
where $R^2:=a^2x(s_0)^2+b^2z(s_0)^2.$ In particular we have the following lemma.

\begin{lemma}
\label{lemma Free boundary}
    Assume that $\beta(s)$ is defined for $s\in[c,d]$ and consider $\mathcal{Z} =\{s\in[c,d]; z'(s) = 0\}.$ Let us consider $a$ and $b$ positive real numbers, such that $a^2\leq b^2$ and define the function $\rho: [c,d] \setminus \mathcal{Z}\rightarrow\mathbb{R}$ by
    \begin{align*}
        \rho(s)=x(s)-\frac{x'(s)}{z'(s)}z(s)\frac{b^2}{a^2}.
    \end{align*}
    Let $s_1<s_2$ be two values in $[c,d]$ such that:\vspace{0.5cm}
    
    \text{(i)} $\rho(s_1)=\rho(s_2)=0$,\vspace{0.5cm}
    
    \text{(ii)} $a^2x(s_1)^2+b^2z(s_1)^2=a^2x(s_2)^2+b^2z(s_2)^2:=R^2$ and\vspace{0.5cm}

    \text{(iii)}  $a^2x(s)^2+b^2z(s)^2<R^2$ for all $s\in(s_1,s_2).$\vspace{0.5cm}
    
    Then, the rotation of $\beta_{|_{[s_1,s_2]}}$ produces a free boundary surface $\Sigma$ inside the rotational ellipsoid $E$ given by
    \begin{align}
        \label{elipsoide}
        a^2x^2+a^2y^2+b^2z^2=R^2.
    \end{align}
    
\end{lemma}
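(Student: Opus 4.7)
The plan is to verify the three ingredients that make $\Sigma$ a free boundary surface inside the ellipsoid $E$: (a) $\partial\Sigma \subset E$, (b) $\Sigma$ is contained in the closed region bounded by $E$, and (c) $\Sigma$ meets $E$ orthogonally along $\partial\Sigma$. Items (a) and (b) will be immediate consequences of hypotheses (ii) and (iii), while (c) is where the pointwise condition (i) really enters, translated into the language of the auxiliary function $g$ already computed in equation (\ref{g(x,y)}).

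For (a) and (b), I would parametrize $\Sigma$ as $X(s,\theta) = (x(s)\cos\theta, x(s)\sin\theta, z(s))$ for $s\in[s_1,s_2]$ and observe that the defining quadratic of $E$ sees only $x(s)^2 = (x(s)\cos\theta)^2 + (x(s)\sin\theta)^2$ and $z(s)^2$. Hence evaluating $a^2(x_1^2+x_2^2)+b^2 z^2$ on $\Sigma$ yields $a^2 x(s)^2 + b^2 z(s)^2$, which equals $R^2$ at $s=s_1,s_2$ by (ii) and is strictly less than $R^2$ on $(s_1,s_2)$ by (iii). This places both endpoint circles on $E$ and the open part of $\Sigma$ in the interior of $E$.

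For (c), I would invoke the formula $g(x,y) = -x(s)z'(s) + x'(s)z(s)\frac{b^2}{a^2}$ derived in (\ref{g(x,y)}) under the choice of $f$ from Remark \ref{obs para f no elipsoide}, together with the fact recorded in the preliminaries that the free boundary condition $\langle N,\bar N\rangle=0$ on $\partial\Sigma$ is equivalent to $g\equiv 0$ there. At a boundary value $s=s_i$, the very definition of $\rho$ forces $z'(s_i)\neq 0$ (since $\rho$ is only defined off $\mathcal{Z}$), so we can factor
\begin{equation*}
g(X(s_i,\theta)) \;=\; -z'(s_i)\left(x(s_i)-\frac{x'(s_i)}{z'(s_i)}z(s_i)\frac{b^2}{a^2}\right) \;=\; -z'(s_i)\,\rho(s_i),
\end{equation*}
which vanishes, uniformly in $\theta$, by hypothesis (i). There is no genuine obstacle here; the only piece worth highlighting is the algebraic identification $g = -z'\rho$, which is precisely what promotes the zero set of $\rho$ to the correct pointwise translation of orthogonality to the ellipsoid.
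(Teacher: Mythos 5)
Your proof is correct. The containment and boundary-incidence steps (your (a) and (b)) coincide with the paper's, but for the orthogonality you take the dual route: the paper parametrizes the ellipsoid, computes its unit normal $\bar N$ explicitly, and shows that $\rho(s_i)=0$ forces the profile velocity $\beta'(s_i)$ to be parallel to $\bar N(\beta(s_i))$ (which requires it first to observe $z(s_i)\neq 0$ in order to solve for $x'(s_i)$), whereas you show that the surface normal $N$ is tangent to the ellipsoid by factoring the already-computed quantity (\ref{g(x,y)}) as $g=-z'\rho$ and invoking the characterization of the free boundary condition by the vanishing of $g$ on $\partial\Sigma$. That invocation is legitimate: $g=|\bar\nabla F|\,\langle N,\bar N\rangle$ and $1$ is a regular value of $F$, so at points of $E$ the vanishing of $g$ is genuinely equivalent to $\langle N,\bar N\rangle=0$, even though the preliminaries only state the forward implication; it would be worth one line in your write-up to note this, and also that $ff'=-\tfrac{b^2}{a^2}y$ is independent of $R$, so formula (\ref{g(x,y)}) applies verbatim with the $R$ produced by hypothesis (ii). Your version is slightly more economical --- it recycles (\ref{g(x,y)}), needs no parametrization of $E$ or computation of its normal, and avoids the division by $z(s_i)$ --- while the paper's version makes explicit the geometric picture of the meridian curve meeting the generating ellipse orthogonally. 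One final point that your plan already respects but should be stated as logically necessary rather than merely expository: the identification of ``$g=0$ on $\partial\Sigma$'' with orthogonality to $E$ only makes sense after $\partial\Sigma\subset E$ has been established, so step (a) must precede step (c).
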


\begin{proof}
    The ellipsoid given in (\ref{elipsoide}) can be parametrized by    
    $$\Bar{X}(s,\theta)=\left(\frac{R}{a}\cos(s)\cos(\theta),\frac{R}{a}\cos(s)\sin(\theta),\frac{R}{b}\sin(s)\right).$$
A straight calculation show that
$$\Bar{N}=\frac{(\frac{1}{b}\cos(s)\cos(\theta),\frac{1}{b}\cos(s)\sin(\theta),\frac{1}{a}\sin(s))}{\sqrt{\frac{cos^2(s)}{b^2}+\frac{\sin^2(s)}{a^2}}}.$$
Now, observe that if $\rho(s_1)=\rho(s_2)=0$, then 
\begin{align*}
    0=\rho(s_i)=x(s_i)-\frac{x'(s_i)}{z'(s_i)}z(s_i)\frac{b^2}{a^2}.
\end{align*}
 We have, $z(s_i)\neq0$. In fact, if $z(s_i)=0$,  we conclude that $x(s_i)=0$, what does not happen. Thus, we can write
$$x'(s_i)=\frac{a^2}{b^2}\frac{x(s_i)}{z(s_i)}z'(s_i),$$
$i=1,2.$ Therefore,
\begin{align*}
    \beta'(s_i)&=\left(\frac{a^2}{b^2}\frac{x(s_i)}{z(s_i)}z'(s_i),0,z'(s_i)\right)\\
    &=z'(s_i)z(s_i)\frac{a}{b}\left(\frac{a}{b}x(s_i),0,\frac{b}{a}z(s_i)\right)
\end{align*}
On the other hand, using (ii) we have that the curve $\beta$ intersects the ellipsoid at the points $\beta(s_i)$. The normal at these points is given by
$$\Bar{N}(\beta(s_i))=\frac{\left(\frac{a}{b}x(s_i),0,\frac{b}{a}z(s_i)\right)}{R\sqrt{\frac{cos^2(s)}{b^2}+\frac{\sin^2(s)}{a^2}}}.$$
Then,
$$\beta'(s_i)=z'(s_i)z(s_i)\frac{a}{b}R\sqrt{\frac{cos^2(s)}{b^2}+\frac{\sin^2(s)}{a^2}}\Bar{N}(\beta(s_i)).$$
Thus, the rotation of $\beta_{|_{[s_1,s_2]}}$ is orthogonal to the ellipsoid in (\ref{elipsoide}). As by hypothesis we have $a^2x(s)^2+b^2z(s)^2<R^2$ for all $s\in(s_1,s_2)$ we get that $\Sigma\subset E.$
\end{proof}

Before presenting examples of CMC free boundary surfaces, let us introduce an example in the case where $H=0$, that is, a minimal free boundary surface in the rotational ellipsoid.

\begin{example}
    Consider $\Sigma$ the catenoid obtained by revolving the curve $\beta(s)=(\cosh(s),0,s)$ around the $z$-axis. Parameterizing by arc length we obtain the curve $\Bar{\beta}(s)=(\cosh(\sinh^{-1}(s)),0,\sinh^{-1}(s)).$ Taking $a^2=1$ and $b^2=2$ in (\ref{funçao rho}), we get that $\rho(s)=0$ if and only if
    $$\frac{1}{2\sinh^{-1}(s)}=\tanh(\sinh^{-1}(s)).$$
    Solving the equation we get that $s_1=-0,755...$ and $s_2=0,755...$ are such that $\rho(s_i)=0$ for $i=1,2.$ The parity of the functions $\cosh(s)$ and $\sinh^{-1}$ ensures that 
    $$(\cosh(\sinh^{-1}(s_1))^2+2(\sinh^{-1}(s_1))^2=(\cosh(\sinh^{-1}(s_2)))^2+2(\sinh^{-1}(s_2))^2,$$
    once $s_1=-s_2$. Then, let us define
$$R^2:=(\cosh(\sinh^{-1}(s_1))^2+2(\sinh^{-1}(s_1))^2=(\cosh(\sinh^{-1}(s_2)))^2+2(\sinh^{-1}(s_2))^2.$$
This way, the degrowth and growth of $\cosh(s)$ in $(s_1,0)$ and $(0,s_2)$, respectively, and the fact that $\sinh^{-1}(s)$ is increasing guarantee that  $(\cosh(\sinh^{-1}(s))^2+2(\sinh^{-1}(s))^2<R^2$ for all $s\in(s_1,s_2).$ Then, $\Sigma$ is a free boundary surface in the ellipsoid $E$ given by
$$x^2+y^2+2z^2=R^2.$$

Furthermore, with some calculations we get that
    \begin{align*}
       -1-x''(s)\left(x(s)-\frac{x'(s)}{z'(s)}z(s)\frac{b^2}{a^2}\right)=-1-\frac{1}{(1+s^2)^{\frac{3}{2}}}\left(\cosh(\sinh^{-1}(s))-2s\sinh^{-1}(s)\right)\leq0
    \end{align*}
and
    \begin{align*}
       -x(s)x'(s)^2-z'(s)x'(s)z(s)\frac{b^2}{a^2}=-\frac{s}{1+s^2}\left(s\cosh(\sinh^{-1}(s))+2\sinh^{-1}(s)\right)\leq 0,
   \end{align*}
for all $s\in[s_1,s_2].$ Then, from Lemma \ref{lema do gap}, $\Sigma$ satisfies the condition
$$ |\phi|^2g(x,y)^2\leq\frac{1}{2}(2+2Hg(x,y))^2.$$

 \vspace{-0.2cm}
 \begin{figure}[H]
     \centering
         \includegraphics[width=10cm]{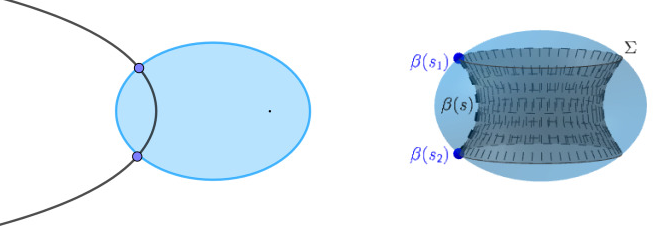}
         \vspace{-0.2cm}
     \caption{Catenoid free boundary in the elipsoid}
     
     \label{fig:Catenoid}
 \end{figure}

\end{example}

Now, let us consider a smooth curve parametrized by arc length in
the $xz$-plane $\beta(s) = (x(s), 0, z(s))$, with $x(s) > 0$, where
\begin{align}
    \label{x}
    x(s)=\frac{1}{H}\sqrt{1+B^2+2B\sin(Hs+\frac{3\pi}{2})}
\end{align}
and
\begin{align}
    \label{z}
    z(s)=\int^{s+\frac{3\pi}{2H}}_{\frac{3\pi}{2H}}\frac{1+B\sin(Ht)}{\sqrt{1+B^2+2B\sin(Ht)}}dt,
\end{align}
are given by the solution of
Kenmotsu \cite[Section 2, Equation (11)]{Kenmotsu:1980}, where $B, H \in \mathbb{R},$ with $ H > 0,\ B \geq 0$ and $B\neq 1$. Let denote by $\Sigma$ the surface obtained by rotation
of $\beta$ around the $z$-axis.

From Delaunay's Theorem, we know that any complete surface of revolution with constant mean
curvature is a sphere, a catenoid, or a surface whose generating curve is given by $\beta$. 

A surface whose generating curve is given by $\beta$ is called a Delaunay surface, with parameters $H$ and $B$, which can be of different types. If $B=0$ we get right cylinders. If $0 < B < 1$, Delaunay surfaces are embedded
and they are called unduloids. If $B > 1$ they are only immersed and called nodoids.

Observe that the components of the velocity vector of the curve $\beta(s)$ in the $xz$-plane are given by
\begin{align*}
    x'(s)=\frac{B\cos(Hs+\frac{3\pi}{2})}{\sqrt{1+B^2+2B\sin(Hs+\frac{3\pi}{2})}} \text{ and } z'(s)=\frac{1+B\sin(Hs+\frac{3\pi}{2})}{\sqrt{1+B^2+2B\sin(Hs+\frac{3\pi}{2})}}.
\end{align*}
And the acceleration components are given by
\begin{align*}
x''(s)=\frac{-BH(B+\sin(Hs+\frac{3\pi}{2}))(B\sin(Hs+\frac{3\pi}{2})+1)}{(1+B^2+2B\sin(Hs+\frac{3\pi}{2}))^\frac{3}{2}}
\end{align*}
and
\begin{align*}
z''(s)=\frac{HB^2\cos(Hs+\frac{3\pi}{2})(B+\sin(Hs+\frac{3\pi}{2}))}{(1+B^2+2B\sin(Hs+\frac{3\pi}{2}))^\frac{3}{2}}.
\end{align*}

Let us assume that $0 < B < 1$. The key observation in this case is that the function $z$ satisfies $z'(s) > 0$ for all $s$. Let $s_0$ be the smaller positive value such that $x''(s_0) = 0$. One can easily check that $s_0 = s_0(H, B) = \frac{1}{H}\sin^{-1}(-B) +\frac{\pi}{2H},$ where $\sin^{-1} : [-1, 1] \rightarrow [-\frac{\pi}{2},\frac{\pi}{2} ]$. Thus, given $s \in(-s_0, s_0)$ we have $z'(s) > 0$ and $x''(s) > 0.$

\begin{remark}
    In this case, we only have $x' = 0$ at point $0$, so the tangent is only vertical at this point. Therefore, we only have one wave of the unduloid inside the ellipsoid.
\end{remark}

Now, let us see some properties of the function $\rho$ that we will need later.
\begin{lemma}
\label{lema rho decrescente}
    Fix $0< B < 1$, $H > 0$, and consider the function $\rho : [-s_0, s_0] \rightarrow \mathbb{R}$ given by (\ref{funçao rho}). Then,\\
    i) $\rho(0)>0$.\\
    ii) $\rho'(0)=0$ and $\rho'(s_0)\leq0$.\\
    iii) $\rho$ is increasing in $(- s_0,0)$ and decreasing in $(0, s_0).$
\end{lemma}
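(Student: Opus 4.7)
My plan is to reduce everything to the half-interval $[0,s_0]$ by a parity argument. Using $\sin(Hs+3\pi/2)=-\cos(Hs)$ and the substitution $u=t-\tfrac{3\pi}{2H}$ in the integral defining $z$, the profile curve rewrites as
$$
x(s)=\tfrac{1}{H}\sqrt{1+B^{2}-2B\cos(Hs)},\qquad z(s)=\int_{0}^{s}\frac{1-B\cos(Hu)}{\sqrt{1+B^{2}-2B\cos(Hu)}}\,du,
$$
which exhibits $x$ as even and $z$ as odd. Hence $x'z/z'$ is even, so $\rho$ itself is even. In particular item (i) follows from $\rho(0)=x(0)=(1-B)/H>0$.

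For items (ii) and (iii) I would introduce $U(s):=x'(s)/z'(s)=B\sin(Hs)/(1-B\cos(Hs))$. The Pythagorean identity $\sin^{2}(Hs)+\cos^{2}(Hs)=1$ collapses the numerator of $U'$ nicely, yielding
$$
U'(s)=\frac{BH\bigl(\cos(Hs)-B\bigr)}{\bigl(1-B\cos(Hs)\bigr)^{2}}.
$$
Since $U(s)z'(s)=x'(s)$, the product rule then gives the clean decomposition
$$
\rho'(s)=x'(s)\left(1-\tfrac{b^{2}}{a^{2}}\right)-\tfrac{b^{2}}{a^{2}}\,U'(s)\,z(s).
$$
The vanishing of $x'(0)$ and $z(0)$ immediately forces $\rho'(0)=0$.

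Next I would exploit the coincidence $\cos(Hs_0)=B$, which follows directly from $Hs_0=\sin^{-1}(-B)+\pi/2$ together with $\cos(\alpha+\pi/2)=-\sin\alpha$. This kills the second summand at $s_0$, so $\rho'(s_0)=x'(s_0)\bigl(1-b^{2}/a^{2}\bigr)=B\bigl(1-b^{2}/a^{2}\bigr)\le 0$ since $a^{2}\le b^{2}$, completing (ii). For (iii), on $(0,s_0)$ one has $Hs\in(0,\pi/2)$, so $\sin(Hs)>0$ gives $x'(s)>0$ and $\cos(Hs)>B=\cos(Hs_0)$ gives $U'(s)>0$; moreover $z(s)>0$ because $z$ is the integral of a positive function starting from zero. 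With $1-b^{2}/a^{2}\le 0$, both summands in $\rho'(s)$ are nonpositive and the second is strictly negative, whence $\rho'(s)<0$ on $(0,s_0)$. The evenness of $\rho$ then yields $\rho'(s)>0$ on $(-s_0,0)$.

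The only nontrivial steps are the algebraic simplification producing the compact formula for $U'$ and the trigonometric observation $\cos(Hs_0)=B$; once those are in hand, each assertion reduces to a sign inspection, with the parity argument closing the lemma.
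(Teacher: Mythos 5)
Your proof is correct and follows essentially the same route as the paper: the same decomposition $\rho'(s)=\left(1-\tfrac{b^2}{a^2}\right)x'(s)-\tfrac{b^2}{a^2}\left(\tfrac{x'}{z'}\right)'(s)\,z(s)$, the same evaluation at $s_0$ via $\cos(Hs_0)=B$, and the same sign inspection on $(0,s_0)$. The only cosmetic differences are that you compute $U'=(x'/z')'$ explicitly in trigonometric form where the paper uses the arc-length identity $(x'/z')'=x''/z'^3$ together with the sign of $x''$, and that you handle $(-s_0,0)$ by parity rather than by a direct sign check.
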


\begin{proof}
   Observe that i) follows directly since
   \begin{align*}
       \rho(0)=x(0)-\frac{x'(0)}{z'(0)}z(0)\frac{b^2}{a^2}=\frac{1-B}{H}>0.
   \end{align*}
   To proof ii) we
observe that, since $\beta$ is parametrized by arc length, then
\begin{align*}
    \rho'(s)&=x'(s)-\frac{b^2}{a^2}\frac{((x'(s)z(s))'z'(s)-x'(s)z(s)z''(s))}{z'(s)^2}\\
    &=x'(s)+\frac{b^2}{a^2}\frac{x'(s)z(s)z''(s)-x''(s)z(s)z'(s)-x'(s)z'(s)^2}{z'(s)^2}\\
    &=\left(1-\frac{b^2}{a^2}\right)x'(s)+\frac{b^2}{a^2}z(s)\left(\frac{x'(s)z''(s)-x''(s)z'(s)}{z'(s)^2}\right)\\
    &=\frac{(a^2-b^2)}{a^2}x'(s)-\frac{b^2}{a^2}z(s)\left(\frac{x'(s)}{z'(s)}\right)'.
\end{align*}
As $x'(0)=0$ and $z(0)=0$ it follows that $\rho'(0)=0.$ On the other hand, using the expressions for $k_1$ given in (\ref{curvaturas k1 e k2}) and (\ref{curvatura k1 z' não nulo}) we get
\begin{align*}
     \rho'(s)&=\frac{(a^2-b^2)}{a^2}x'(s)-\left(\frac{-x'(s)z''(s)+x''(s)z'(s)}{z'(s)^2}\right)z(s)\frac{b^2}{a^2}\\
     &=\frac{(a^2-b^2)}{a^2}x'(s)+\frac{k_1(s)}{z'(s)^2}z(s)\frac{b^2}{a^2}\\
     &=\frac{(a^2-b^2)}{a^2}x'(s)-\frac{x''(s)}{z'(s)^3}z(s)\frac{b^2}{a^2}.
\end{align*}
Then, since $x''(s_0)=0$ we have that
\begin{align*}
    \rho'(s_0)&=(a^2-b^2)x'(s_0)\\
    &=\frac{(a^2-b^2)}{a^2}\frac{B\cos(Hs_0+\frac{3\pi}{2})}{\sqrt{1+B^2+2\sin(Hs_0+\frac{3\pi}{2})}}\\
    &=\frac{(a^2-b^2)}{a^2}\frac{B\sqrt{1-B^2}}{{\sqrt{1-B^2}}}\\
    &=\frac{(a^2-b^2)}{a^2}B\leq0.
\end{align*}
Finally, since $x''(s)>0$ and $x'(0)=0$ we get that $x'(s)>0$ for all $s\in (0,s_0)$ and $x'(s)<0$ for all $s\in(-s_0, 0).$ In the same way, we have $z(s)>0$ in $(0,s_0)$ and $z(s)<0$ in $(-s_0,0)$, then we obtain
\begin{align*}
    \rho'(s)=\frac{(a^2-b^2)}{a^2}x'(s)-\frac{x''(s)}{z'(s)^3}z(s)\frac{b^2}{a^2}<0
\end{align*}
in $(0,s_0),$ and
\begin{align*}
    \rho'(s)=\frac{(a^2-b^2)}{a^2}x'(s)-\frac{x''(s)}{z'(s)^3}z(s)\frac{b^2}{a^2}>0
\end{align*}
in $(-s_0,0)$. Therefore, $\rho$ is increasing in $(- s_0,0)$ and decreasing in $(0, s_0).$
\end{proof}

The next lemma gives us conditions to have an unduloid that is a free boundary surface on the rotational ellipsoid.
\begin{lemma}
\label{free boundary cmc no E}
    Fix $0 < B < 1$, $H > 0$, and set $z_0 = \frac{1-B^2}{HB}$. If $z(s_0) \geq z_0$, then $\rho(\Bar{s}) = 0$ for some $\bar{s} \in(0, s_0]$. In particular, the surface obtained by rotation of $\beta|_{[-\bar{s},\bar{s}]}$ is a free boundary CMC surface inside the rotational ellipsoid $E$ given by
    \begin{align*}
        a^2x^2+a^2y^2+b^2z^2=\Bar{R}^2,
    \end{align*}
where $\Bar{R}^2:=a^2x(\bar{s})^2+b^2z(\bar{s})^2$.
\end{lemma}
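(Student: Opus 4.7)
The plan is to apply Lemma \ref{lemma Free boundary} with endpoints $s_1=-\bar{s}$ and $s_2=\bar{s}$, after first producing $\bar{s}$ via an intermediate value argument on $\rho$. The argument naturally splits into three tasks: an explicit evaluation of $\rho$ at $s_0$ to secure a sign change; a parity argument that converts the scalar vanishing at $\bar{s}$ into the symmetric hypotheses at $\pm\bar{s}$; and a monotonicity check that forces the interior of the arc to lie strictly inside the ellipsoid.

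For the first step I would compute $\rho(s_0)$ in closed form. The defining relation $x''(s_0)=0$ forces $\sin(Hs_0+\tfrac{3\pi}{2})=-B$, and since $Hs_0=\sin^{-1}(-B)+\tfrac{\pi}{2}\in(0,\tfrac{\pi}{2})$ one also has $\cos(Hs_0+\tfrac{3\pi}{2})=\sqrt{1-B^2}>0$. Substituting into \eqref{x} and the explicit formulas for $x'$ and $z'$ yields $x(s_0)=\sqrt{1-B^2}/H$, $x'(s_0)=B$, and $z'(s_0)=\sqrt{1-B^2}$, hence
$$\rho(s_0)=\frac{\sqrt{1-B^2}}{H}-\frac{B\,b^2}{a^2\sqrt{1-B^2}}\,z(s_0)=\frac{Bb^2/a^2}{\sqrt{1-B^2}}\left(\frac{a^2}{b^2}z_0-z(s_0)\right).$$
Because $a^2\le b^2$, the hypothesis $z(s_0)\ge z_0$ forces $\rho(s_0)\le 0$. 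Combined with $\rho(0)=(1-B)/H>0$ from Lemma \ref{lema rho decrescente}(i) and the strict decrease of $\rho$ on $(0,s_0)$ given by part (iii) of the same lemma, the intermediate value theorem delivers $\bar{s}\in(0,s_0]$ with $\rho(\bar{s})=0$.

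The second step is to promote this scalar conclusion to the symmetric data demanded by Lemma \ref{lemma Free boundary}. Using $\sin(Hs+\tfrac{3\pi}{2})=-\cos(Hs)$, the formulas \eqref{x} and \eqref{z} show that $x$ is even and $z$ is odd in $s$; consequently $x'$ is odd, $z'$ is even, and $\rho$ is even. This immediately yields $\rho(-\bar{s})=\rho(\bar{s})=0$ and
$$a^2x(-\bar{s})^2+b^2z(-\bar{s})^2=a^2x(\bar{s})^2+b^2z(\bar{s})^2=\bar{R}^2,$$
verifying hypotheses (i) and (ii). For (iii) I would note that on $(0,s_0)$ each of $x,\,x',\,z,\,z'$ is positive (positivity of $z'$ from $0<B<1$, of $x'$ from $Hs<\pi/2$, and of $z$ by integrating $z'$ from $0$), so
$$\frac{d}{ds}\bigl(a^2x(s)^2+b^2z(s)^2\bigr)=2\bigl(a^2x(s)x'(s)+b^2z(s)z'(s)\bigr)>0$$
on $(0,s_0)$. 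Since $a^2x^2+b^2z^2$ is even with derivative vanishing at $s=0$, this strict monotonicity yields $a^2x(s)^2+b^2z(s)^2<\bar{R}^2$ for every $s\in(-\bar{s},\bar{s})$.

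With the three hypotheses verified, Lemma \ref{lemma Free boundary} concludes that the surface of revolution of $\beta|_{[-\bar{s},\bar{s}]}$ is free boundary inside $E$; the CMC property is automatic, since the Kenmotsu profile with parameter $H$ generates by construction a surface of constant mean curvature $H$. I expect the main nuisance to be the trigonometric bookkeeping at $s_0$ — in particular, pinning down the correct sign of $\cos(Hs_0+\tfrac{3\pi}{2})$ — after which the rest of the argument reduces to the intermediate value theorem together with elementary parity and monotonicity considerations.
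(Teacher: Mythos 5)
Your proposal is correct and follows essentially the same route as the paper: show $\rho(s_0)\le 0$ from the hypothesis $z(s_0)\ge z_0$ together with $a^2\le b^2$, invoke $\rho(0)>0$ and continuity to find $\bar{s}$, use parity of $x,z$ to get the symmetric hypotheses at $\pm\bar{s}$, check strict monotonicity of $a^2x^2+b^2z^2$ on $(0,\bar{s})$, and conclude via Lemma \ref{lemma Free boundary}. The only cosmetic difference is that you evaluate $x(s_0),x'(s_0),z'(s_0)$ in closed form and factor $\rho(s_0)$ explicitly, whereas the paper bounds $\rho(s_0)$ directly by substituting $z_0$ for $z(s_0)$; both yield the same inequality.
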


\begin{proof}
    If $z(s_0) \geq z_0$, then we get
    \begin{align*}
        \rho(s_0)&=x(s_0)-\frac{x'(s_0)}{z'(s_0)}z(s_0)\frac{b^2}{a^2}\\
        &\leq x(s_0)-\frac{x'(s_0)}{z'(s_0)}z_0\frac{b^2}{a^2}\\
        &=\frac{(a^2-b^2)}{a^2}\frac{\sqrt{1-B^2}}{H}\leq 0.
    \end{align*}
By assertion i) of Lemma \ref{lema rho decrescente}, $\rho(0) > 0$, and then  by continuity there is $\bar{s}\in (0\ s_0]$ such that $\rho(\bar{s}) = 0$. Using the parity of functions $\sin(Ht+\frac{3\pi}{2})$ and $\sin(Ht)$, we get
\begin{align*}
  x(-s)=x(s),\  x'(-s)=-x'(s),\ z(-s)=-z(s) \text{ and } z'(-s)=z'(s),
\end{align*} 
and thus,
$$\rho(-\bar{s}) = \rho(\bar{s}) = 0.$$
Moreover, $x'(0) = 0$ and $x''(s) > 0$ imply that $x'(s) > 0$ for all $s \in (0, \bar{s}]$. Therefore, $x'(s) > 0$ and $z'(s) > 0$ in $(0,\Bar{s})$, and it
ensures $a^2x^2(s) + b^2z^2(s) < \Bar{R}^2 := a^2x^2(\bar{s}) + b^2z^2(\bar{s})$ for all $s \in(0, \bar{s}]$. Because the curve
$\beta$ is symmetric with respect to $x$-axis  we  get $a^2x^2(s) + b^2z^2(s) \leq \Bar{R}^2$ for all $s\in[-\bar{s},\bar{s}]$ and we conclude that the surface is free boundary by Lemma \ref{lemma Free boundary}.
\end{proof}

\begin{example}
    Fix $B = 0,9$ and $H = 0,1$, so we have $z_0 =\frac{1-B^2}{HB}= 2,111...$ and $s_0 = 10\sin^{-1}(-0,9) + 5\pi \approx 4,51026.$ Then, we get
$$z_0(s_0)=\int_{15\pi}^{4,51026+15\pi}\left(\frac{1+(0,9)\sin(0,1t)}{\sqrt{1+(0,9)^2+(1,8)\sin(0,1t)}}\right)dt\approx2,71697.$$
    Therefore, $z(s_0)\geq z_0$. From Lemma \ref{free boundary cmc no E}, there is $\bar{s} \in(0, s_0]$ such that the surface obtained by rotation of $\beta|_{[-\bar{s},\bar{s}]}$ is a free boundary CMC surface inside the rotational ellipsoid $E$ given by
    \begin{align}
    \label{elipsoide2}
        a^2x^2+a^2y^2+b^2z^2=\Bar{R}^2,
    \end{align}
where $\Bar{R}^2:=a^2x(\bar{s})^2+b^2z(\bar{s})^2$.

\vspace{-0.5cm}
 \begin{figure}[H]
     \centering
         \includegraphics[width=12cm]{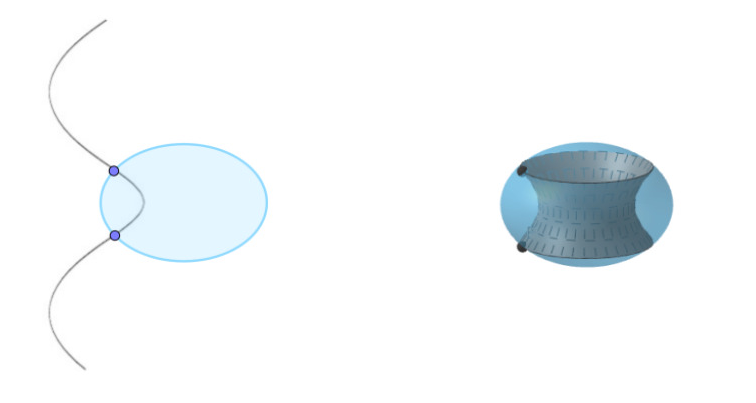}
         \vspace{-0.5cm}
     \caption{Unduloid free boundary in the elipsoid}
     
     \label{fig:Unduloid}
 \end{figure}

\end{example}

The next example says essentially that there are portions of unduloids that are free boundary in the ellipsoid given by (\ref{elipsoide2}) and satisfy the conditions of Lemma \ref{lema do gap}, there is, satisfy
$$ |\phi|^2g(x,y)^2\leq\frac{1}{2}(2+2Hg(x,y))^2,$$
on $\Sigma.$ 

\begin{example}
    Fix $0<B < 1$ and $H > 0$ and consider $\beta(s)=(x(s),0, z(s))$ as above and set $z_0=\frac{1-B^2}{HB}$. Let $s_0$ be the smaller positive value such that $x''(s_0) = 0$, in other words, $s_0= \frac{1}{H}\sin^{-1}(-B)+\frac{\pi}{2H}.$ Suppose $z(s_0)\geq z_0.$ From Lemma (\ref{free boundary cmc no E}), the surface $\Sigma$ obtained by rotation of $\beta|_{[-\Bar{s},\Bar{s}]}$, for some $\Bar{s}\in (0,s_0],$ is a free boundary CMC surface inside the rotational ellipsoid $E$ given by (\ref{elipsoide}). Moreover, in this case, for all $s\in[-\Bar{s},\Bar{s}]$ we have\vspace{0.5cm}
    
    (i) $x''(s)\geq0.$ In fact, we have $[-\Bar{s},\Bar{s}]\subset[-s_0,s_0],$ where $s_0$ was chosen to be the largest neighborhood of $0$ where $x''(s)\geq0.$\vspace{0.5cm}
    
    (ii) $\rho(s):=x(s)-\frac{x'(s)}{z'(s)}z(s)\frac{b^2}{a^2}\geq 0.$ Indeed, from Lemma \ref{free boundary cmc no E}, $\rho(\Bar{s})=0$. From Lemma \ref{lema rho decrescente}, $\rho$ is increasing in $(-s_0, 0)$ and decreasing in $(0, s_0)$. Therefore, $\rho(s)\geq0.$\vspace{0.5cm}
    
    (iii) $z(s)x'(s)\geq0.$ In fact, since $z'(s)>0$ and $x''(s)>0$ in $(-s_0,s_0)$, we get that $z$ and $x'$ are both growing in $(-s_0,s_0).$ Since $z(0)=x'(0)=0$, we conclude that $x'$ and $z$ has the same sign. \vspace{0.5cm}

The  items (i), (ii) and (iii) guarantee that the inequalities in Lemma \ref{lema do gap} are satisfied. In fact, from (i) and (ii) we get (\ref{h1}). Since $z'>0$, we do not need to show the validity of (\ref{h2}). Using that $x>0$ and (iii) we get (\ref{h3}). Therefore,
$$ |\phi|^2g(x,y)^2\leq\frac{1}{2}(2+2Hg(x,y))^2,$$
on $\Sigma.$

\end{example}

Now, let us assume that $B>1$. Let $r_0$ be the smaller positive value such that $z'(r_0)=0.$ We can check that $r_0=r_0(H,B)= \frac{1}{H}\sin^{-1}\left(-\frac{1}{B}\right)+\frac{\pi}{2H}$, where $\sin^{-1}:[-1,1]\rightarrow[-\frac{\pi}{2},\frac{\pi}{2}].$ In this case, we have $z'(r)<0$ and $x''(r)>0$ for all $r\in (-r_0,r_0)$.

\begin{remark}
    In this case, since $z' \neq 0$ for all $r\in(-r_0,r_0)$, we do not have horizontal tangents. Therefore, the node of the nodoids does not lie inside the ellipsoid.
\end{remark}

In the next Lemma we are going to show that there are portions of nodoids that are free boundary in the ellipsoid given by (\ref{elipsoide2}) and satisfy the conditions of Lemma \ref{lema do gap}, there is, satisfy
$$ |\phi|^2g(x,y)^2\leq\frac{1}{2}(2+2Hg(x,y))^2,$$
on $\Sigma.$ 

\begin{lemma}
\label{lema nodoide}
    Fix $B>1$ and $H>0$ and consider $\beta(r)=(x(r),0,z(r))$, with $x$ and $z$ given in (\ref{x}) and (\ref{z}), respectively. Let $r_0$ as above, then, there is $\bar{r}\in (-r_0,r_0)$ such that $\rho(\bar{r})=0$ and the surface obtained by rotation of $\beta|_{[-\bar{r},\bar{r}]}$ is a free boundary CMC surface inside the rotational ellipsoid $E$ given by
    \begin{align*}
        a^2x^2+a^2y^2+b^2z^2=\Bar{R}^2,
    \end{align*}
where $\bar{R}^2:=a^2x(\bar{s})^2+b^2z(\bar{s})^2$. 
Furthermore, we have 
$$ |\phi|^2g(x,y)^2\leq\frac{1}{2}(2+2Hg(x,y))^2,$$
on $\Sigma.$ 
\end{lemma}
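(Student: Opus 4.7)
The plan is to mirror the strategy of Lemma \ref{free boundary cmc no E} and its accompanying unduloid example, adapted to the nodoidal regime $B>1$. The essential difference is that, here, a zero of $\rho$ is produced automatically by a blow-up of $x'/z'$ at $r_0$, so no auxiliary hypothesis analogous to $z(s_0)\geq z_0$ is needed.

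First, I would produce $\bar r$. A short calculation using $\sin(3\pi/2)=-1$, $\cos(3\pi/2)=0$ and $B>1$ gives $x'(0)=z(0)=0$ and $x(0)=(B-1)/H$, hence $\rho(0)=(B-1)/H>0$. At the endpoint $r_0$ the defining relation $\sin(Hr_0+3\pi/2)=-1/B$ yields $z'(r_0)=0$ and $x'(r_0)=1$, while $z(r_0)<0$ is finite and nonzero (since $z(0)=0$ and $z'<0$ on $(0,r_0)$). Consequently $\frac{x'(r)}{z'(r)}z(r)\to+\infty$ as $r\to r_0^-$, so $\rho(r)\to-\infty$, and the intermediate value theorem furnishes a smallest $\bar r\in(0,r_0)$ with $\rho(\bar r)=0$. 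As in the unduloid case, the parities $x(-r)=x(r)$, $x'(-r)=-x'(r)$, $z(-r)=-z(r)$, $z'(-r)=z'(r)$ make $\rho$ an even function, so $\rho(-\bar r)=0$ and $\rho\geq 0$ on the whole interval $[-\bar r,\bar r]$ by minimality of $\bar r$.

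Second, I would invoke Lemma \ref{lemma Free boundary}. Parity gives $a^2x(-\bar r)^2+b^2z(-\bar r)^2=a^2x(\bar r)^2+b^2z(\bar r)^2=:\bar R^2$. For $\psi(r):=a^2x(r)^2+b^2z(r)^2$ one has $\psi'=2a^2xx'+2b^2zz'$. On $(0,\bar r)$ the signs are $x'>0$, $z<0$, $z'<0$, so $\psi'>0$; by parity the signs on $(-\bar r,0)$ flip in $x'$ and $z$ but not in $z'$, giving $\psi'<0$. Hence $\psi(r)<\bar R^2$ on $(-\bar r,\bar r)$, and Lemma \ref{lemma Free boundary} produces the free boundary CMC surface inside the ellipsoid $E$.

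Finally, I would verify the three hypotheses of Lemma \ref{lema do gap} on $[-\bar r,\bar r]$. Condition (\ref{h1}) becomes $x''\rho\geq -1$, and this is immediate from $x''>0$ on $(-r_0,r_0)$ together with the inequality $\rho\geq 0$ established above. Condition (\ref{h2}) is vacuous since $z'\neq 0$ on $[-\bar r,\bar r]\subset(-r_0,r_0)$. Condition (\ref{h3}), rewritten as $x'\,(xx'+\tfrac{b^2}{a^2}zz')\geq 0$, splits by the sign of $r$: for $r\in(0,\bar r)$ both factors are positive (using $x,x'>0$ and $z,z'<0$), for $r\in(-\bar r,0)$ both are negative, and at $r=0$ the expression vanishes. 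Lemma \ref{lema do gap} then delivers the pinching inequality on $\Sigma$. The only delicate step is the blow-up analysis of $\rho$ near $r_0$, which is where the nodoidal assumption $B>1$ is used essentially; the rest is careful sign bookkeeping parallel to the unduloid case.
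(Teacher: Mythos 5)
Your proposal is correct and follows essentially the same route as the paper: $\rho(0)=(B-1)/H>0$ together with the blow-up $\rho\to-\infty$ as $r\to r_0^-$ yields $\bar r$ by the intermediate value theorem, parity gives $\rho(-\bar r)=0$ and the symmetric endpoint condition, monotonicity of $a^2x^2+b^2z^2$ verifies hypothesis (iii) of Lemma \ref{lemma Free boundary}, and the sign bookkeeping confirms (\ref{h1}) and (\ref{h3}) for Lemma \ref{lema do gap}. The only (harmless) variation is that you obtain $\rho\geq 0$ on $[-\bar r,\bar r]$ by taking the smallest zero $\bar r$, whereas the paper derives it from the monotonicity of $\rho$ on $(-r_0,0)$ and $(0,r_0)$; both arguments are valid.
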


\begin{proof}
In fact, we have that
$$\rho(0)=x(0)-\frac{x'(0)}{z'(0)}z(0)\frac{b^2}{a^2}=\frac{|1-B|}{H}>0,$$
and $\rho(r)\rightarrow -\infty$ when $r\rightarrow r_0$. Then, by continuity there is $\bar{r}\in(0,r_0)$ such that $\rho(\bar{r})=0.$ Using the parity of function $\rho$ we have $$\rho(\bar{r})=\rho(-\bar{r})=0.$$
Moreover, $x'(0) = 0$ and $x''(r) > 0$ imply that $x'(r) < 0$ for all $r \in (-\bar{r}, 0)$. Therefore, $x'(r) < 0$ and $z'(r) < 0$ in $(-\bar{r}, 0)$, and it
ensures $a^2x^2(r) + b^2z^2(r) < \Bar{R}^2 := a^2x^2(-\bar{r}) + b^2z^2(-\bar{r})$ for all $r \in(-\bar{r}, 0)$. Because the curve
$\beta$ is symmetric with respect to $x$-axis  we  get $a^2x^2(r) + b^2z^2(r) \leq \Bar{R}^2$ for all $r\in[-\bar{r},\bar{r}]$ and we conclude that the surface is free boundary by Lemma \ref{lemma Free boundary}. Furthermore, in this case, for all $r\in [-\bar{r}, \bar{r}]$ we have
\vspace{0.5cm}

(i) $\rho(r)\geq 0.$ Indeed, as already calculated in Lemma \ref{lema rho decrescente}, we have
$$\rho'(r)=\frac{(a^2-b^2)}{a^2}x'(r)-\frac{x''(r)}{z'(r)^3}z(r)\frac{b^2}{a^2}.$$
Since $x''(r)>0$ and $x'(0)=0$ we get that $x'(r)<0$ for all $r\in(-r_0, 0)$ and  $x'(r)>0$ for all $r\in (0,r_0)$. Similarly, we have $z(r)>0$ in $(-r_0,0)$ and $z(r)<0$ in $(0,r_0)$, then we obtain 
$\rho'(r)>0,\ \forall r\in (-\bar{r},0),$
and
 $\rho'(r)<0,\ \forall r\in (0,\bar{r}).$
Therefore, $\rho$ is increasing in $(- r_0,0)$ and decreasing in $(0, r_0).$ Since $\rho(0)>0$, we conclude that $\rho(r)\geq0, $ for all $r\in [-\bar{r}, \bar{r}]$.
\vspace{0.5cm}

(ii) $x'(r)z(r)\leq 0.$ In fact, since $z'(r)<0$ and $x''(r)>0$ in $(-r_0,r_0)$, we get that $x'$ is growing in $(-r_0,r_0)$ and $z$ is descending in $(-r_0,r_0)$. Since $z(0)=x'(0)=0$, we conclude that $x'$ and $z$ have opposite signs. \vspace{0.5cm}

The  items (i), (ii) and (iii) guarantee that the inequalities in Lemma \ref{lema do gap} are satisfied. In fact, from $x''(r)>0$ and (i) we get (\ref{h1}). Since $z'<0$, we do not need to show the validity of (\ref{h2}). Using that $x>0$ and (ii) we get (\ref{h3}). Therefore,
$$ |\phi|^2g(x,y)^2\leq\frac{1}{2}(2+2Hg(x,y))^2,$$
on $\Sigma.$

\end{proof}

\begin{example}
    Fix $B=1,1$ and $H=0,1$. Then, we have $r_0\approx10\sin^{-1}(-0.91)+5\pi\approx4,297...$. Therefore, $z'(r_0)=0$ and from Lemma \ref{lema nodoide}, there is $\bar{r} \in(0, r_0]$ such that the surface obtained by rotation of $\beta|_{[-\bar{r},\bar{r}]}$ is a free boundary CMC surface inside the rotational ellipsoid $E$ given by
    \begin{align*}
        a^2x^2+a^2y^2+b^2z^2=\Bar{R}^2,
    \end{align*}
where $\Bar{R}^2:=a^2x(\bar{r})^2+b^2z(\bar{r})^2$.

\vspace{0.3cm}
 \begin{figure}[H]
     \centering
         \includegraphics[width=12cm]{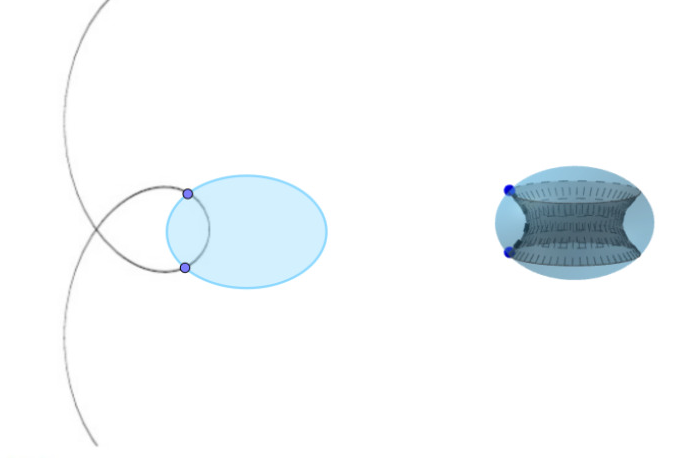}
         \vspace{-0.5cm}
     \caption{Nodoid free boundary in the elipsoid}
     
     \label{fig:nodoid}
 \end{figure}

\end{example}

\section*{Funding}
The first and second authors have been partially supported by Conselho Nacional de Desenvolvimento Científico e Tecnológico (CNPq) of the Ministry of Science, Technology and Innovation of Brazil,
Grants 316080/2021-7,  200261/2022-3, and 306524/2022-8. The authors  also were supported by Paraíba State Research
Foundation (FAPESQ), Grants 3025/2021 and  2021/3175 (A. Freitas).
The third author was partially supported by Grant 2022/1963, Paraíba State Research Foundation (FAPESQ).
\section*{Acknowledgements}
This work is a part of the Ph.D. thesis of the third author. The authors would like to thank Ezequiel Barbosa and Luciano Mari for their discussions about
the object of this paper and several valuable suggestions. 

The first author would like to thank
the hospitality of the Mathematics Department of Università degli Studi di Torino, where part of this work was carried out. The third author would like to express her gratitude for the hospitality and support during her visit to the Mathematics Department of Universidade Federal de Minas Gerais in April/May 2023. 

\section*{Data availability statement}
This manuscript has no associated data.

\end{document}